\documentclass{amsart}
\usepackage{graphicx}
\usepackage{todonotes}
\usepackage{rotating}
\usepackage{mathtools}
\usepackage[utf8]{inputenc}
\usepackage{ mathrsfs }
\usepackage{caption}
\usepackage{amssymb}
\usepackage{amsthm,amsmath}
\usepackage{todonotes}
\usepackage{tikz}
\usepackage{enumitem}
\usetikzlibrary{decorations.pathreplacing,calligraphy}
\usepackage{tikz}
\usepackage{pgfplots}
\pgfplotsset{compat=1.18}
\usepgfplotslibrary{groupplots}
\newtheorem{theorem}{Theorem}
\newtheorem{proposition}{Proposition}
\newtheorem{defn}{Definition}

\newtheorem{corollary}{Corollary}
\newtheorem{lemma}{Lemma}

\author{Anton Gorodetski}

\address{Department of Mathematics, University of California, Irvine CA 92697, USA}

\email{asgor@uci.edu}

 \author{Alexandro Luna}

 \address{Department of Mathematics, The Ohio State University, Columbus OH 43210, USA}

 \email{luna.165@osu.edu}

\author{William Wood}

\address{Department of Mathematics, University of California, Riverside CA 92507, USA}

\email{wwood@ucr.edu}

\title{On sums of ``small'' Cantor sets}
\begin{document}

\begin{abstract} \noindent We provide several results showing that sums of ``small'' Cantor sets can turn into an interval or a finite union of intervals. Specifically, we show that 

1) There exists a Cantor set $C\subset [0,1]$ of zero logarithmic capacity such that $C+C=[0,2]$. 

2) For any $n\ge 2$ there exists a Cantor set $C\subset [0,1]$ of zero logarithmic capacity such that a Minkowski sum of $n$ copies of $C$ is an interval $[0,n]$, while a sum of any smaller number of copies of $C$ is still a Cantor set. 

3) For any $n\ge 2$ there exists a collection of Cantor sets $C_1, C_2, \ldots, C_n\subset [0,1]$ of zero lower box-counting dimension 
such that a Minkowski sum of any $n-1$ sets from this collection is a Cantor set of zero lower box-counting dimension, while the sum of all $n$ sets is  an interval $[0,n]$.

4) There is an uncountable family of Cantor sets of zero lower box-counting dimension such that the sum of any two different sets from this family is a finite union of intervals. 

5) For each of the properties above, there are examples of Cantor sets with the same property having zero Hausdorff $h$-measure for an arbitrary given gauge function $h$. 
\end{abstract}

\maketitle

\section{Introduction}
 Questions on topological structure, measure, and fractal dimensions of sums of Cantor sets appear in many areas of mathematics -- dynamical systems \cite{Gor, N, N79, PT}, Diophantine approximations \cite{CF, H, Ma, MM, Mor, Mor18, MorICM}, harmonic analysis \cite{BKMP, BM}, spectral theory \cite{DG18, DGS, EL}, etc. In many cases the results are presented in the following form: if Cantor sets are ``large'' in some sense (in terms of thickness, measure, some kind of fractal dimension, etc.), then their Minkowski sum, sometimes under genericity assumptions,  is ``large'' in some other way (e.g. has maximal possible fractal dimension as in \cite{HS}, or has positive Lebesgue measure as in \cite{DGS, PSol, SSol, Sol97, Sol23}, or contains an interval as in \cite{N, N79}). For example, Steinhaus' Theorem claims that the sum of two Cantor sets of positive Lebesgue measure must contain an interval \cite{St}. One of the most famous results of this sort is the celebrated Newhouse's Gap Lemma \cite{N, N79}, which claims that if the product of thicknesses of two Cantor sets is greater than one, then its sum must be a finite union of intervals.\footnote{The classical Newhouse's Gap Lemma  deals with intersections of Cantor sets; we formulate it in terms of sumsets to put it into the context of this paper.} An example of two Cantor sets of zero thickness whose sumset is an interval is constructed in \cite{HO}. Extension of the Newhouse's Gap Lemma to sums of more than two Cantor sets was obtained by Astels \cite[Theorem 2.4]{A} (see also \cite{A2, A3, HO}).

Numerous results were obtained on sums of dynamically defined Cantor sets. J.\,Palis conjectured \cite {P} that generically the sum of two dynamically defined Cantor sets either contain an interval, or has zero Lebesgue measure. The conjecture was proved by Moreira and Yoccoz  \cite{MorY}. They showed that generically, if the sum of Hausdorff dimensions of two Cantor sets is larger than one, then their sum contains an interval. This result has numerous consequences. An analogous conjecture for affine Cantor sets is still open (but see \cite{T, T19} for closely related statements).

Dynamically defined Cantor sets, including affine Cantor sets, while must have zero Lebesgue measure, are ``not too small''; in particular, they must have positive Hausdorff dimension. At the same time ``small'' Cantor sets, for example, Cantor sets of zero Hausdorff dimension, naturally appear in some models in mathematical physics and spectral theory, e.g. see \cite{DFG},  \cite{DFL}, \cite{EFGL}, or \cite{FGH}. Therefore, it  makes perfect sense to ask an opposite question: if two Cantor sets are ``small'' in some sense, does it imply that their sum is also ``small'', possibly in a different sense? 
 One example of such result is the following (e.g. see \cite[Chapter 4]{PT}): if sum of box counting dimensions of two Cantor sets is smaller than one, then their sum must be a Cantor set of zero Lebesgue measure.  While certainly useful  (e.g. see \cite{DFG}), this result is formulated in terms of  box counting dimension, and in many natural cases ``small''\footnote{ Sometimes the term ``thin'' is used in relation to spectra of some Schr\"odinger operators, e.g. see \cite{DF, DFW, FGH}, but this clashes with the notion of a thin set in potential theory, so we will not do that.} Cantor sets do not have a well defined box counting dimension.  So, it is natural to ask this question in terms of small (or zero) Hausdorff dimension, or small (or zero) logarithmic capacity, or small (or zero) lower box counting dimension. 

 Moreover, the questions about sums of ``small'' Cantor sets appeared in a natural way in several models in spectral theory of ergodic Schr\"odiger operators. Indeed, it was shown in \cite{DFW} that for a dense set of limit periodic potentials the spectrum of the corresponding Schr\"odinger operator is a Cantor set of zero lower box counting dimension. Analogous statement for limit periodic Schr\"odinger operators on the real line was obtained in \cite{DFL}. Using these results, one can construct almost periodic potentials on the plane (or higher dimensional lattice) that have zero lower box counting dimension, e.g. see \cite{DFG}. The construction uses the fact that for separable potentials the spectrum is given by the Minkowski sum of the spectra of operators on the line (or one dimensional lattice, in the discrete case).  As another example, one can consider the spectrum of the Almost Mathieu operator at critical coupling. It is known that for a generic frequency its spectrum has zero Hausdorff dimension \cite{LS}. At the same time, it is not known whether the spectrum of the corresponding operator on the lattice with separable potential (or, equivalently, the sum of the spectrum of critical Almost Mathieu operator  with itself) is a Cantor set or contains an interval; see \cite{B, Ta} for related questions in the case of non-critical coupling.


In this paper we show that no degree of ``smallness'' of Cantor sets in terms of Hausdorff dimension, logarithmic capacity, or any type of generalized Hausdorff measure can guarantee that their sum is ``small'' in any way. Let us now formally state our results. 

\subsection{Sums of sets of  zero logarithmic capacity}

Let $C \subset \mathbb{R}$ be a compact set.  Denote by $\mathcal{M}(C)$ the set of
Borel probability measures supported on $C$.  For $\mu \in \mathcal{M}(C)$,
define the \emph{logarithmic energy}
\[
  I(\mu) \;=\; \iint \log \frac{1}{|z - w|}\, d\mu(z)\, d\mu(w)
  \;\in\; (-\infty,\, +\infty].
\]
 
\begin{defn}[Logarithmic capacity]\label{def:log_cap}
The {logarithmic capacity} of $C$ is
\[
  \mathrm{cap}(C) \;=\; e^{-V(C)},
\]
where
\[
  V(C) \;=\; \inf_{\mu \in \mathcal{M}(C)} I(\mu).
\]
If $V(C) = +\infty$, one sets 
$\mathrm{cap}(C) = 0$.
\end{defn}

This is not the only way to define the logarithmic capacity of a set; other equivalent definitions involve  transfinite diameter of a set or Green's function, e.g. see \cite{R} for details. 
 
A compact set with
$\mathrm{cap}(C) = 0$ is called a \emph{polar set}; polar sets form a collection of 
negligible sets in  logarithmic potential theory.

\begin{theorem}\label{t.first}
There exists a Cantor set $C\subseteq [0,1]$ of zero logarithmic capacity such that $C+C=[0,2]$.
\end{theorem}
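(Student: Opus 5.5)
We will build $C$ from digit expansions, using a rapidly growing sequence of bases and, at each stage, a ``sparse'' digit set whose sumset is still the full digit range. Take integers $N_k\to\infty$ very fast and work in the mixed radix system with $M_k=N_1N_2\cdots N_k$. Every $x\in[0,1]$ can then be written as $x=\sum_{k\ge1} d_k/M_k$ with $d_k\in\{0,\dots,N_k-1\}$. For each $k$ choose a set $D_k\subset\{0,\dots,N_k-1\}$ with $D_k+D_k\supseteq\{0,\dots,N_k-1\}$ and $|D_k|\approx 2\sqrt{N_k}$. The standard choice is $D_k=\{0,1,\dots,m-1\}\cup\{0,m,2m,\dots,(m-1)m\}$ with $m=\lceil\sqrt{N_k}\,\rceil$; it contains $0$ and $N_k-1$ after a harmless adjustment. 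Set
\[
C=\Bigl\{\sum_k d_k/M_k : d_k\in D_k\Bigr\}.
\]
Then $C$ is compact and perfect. It is nowhere dense because $|D_k|<N_k$, so it is a Cantor set. Moreover $C\subseteq[0,1]$ and $\max C=1$, since each $D_k$ contains $N_k-1$.

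\textbf{Sumset.} For $x\in[0,1]$ with digits $(d_k)$, write $d_k=a_k+b_k$ with $a_k,b_k\in D_k$. Then $x=\sum a_k/M_k+\sum b_k/M_k\in C+C$, so $C+C\supseteq[0,1]$. This alone does not reach $[0,2]$. To get all of $[0,2]$ I will instead require $D_k+D_k=\{0,\dots,2N_k-2\}$, that is, the full range of sums, and handle carries. Every $y\in[0,2]$ has an expansion $y=\sum e_k/M_k$ with $e_k\in\{0,\dots,2N_k-2\}$. To see this, write $y=\sum c_k/M_k$ with digits $c_k\in\{0,\dots,N_k-1\}$, and borrow $N_k$ from the previous digit where needed. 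Alternatively, use a compactness/greedy argument: the set $E=\{\sum e_k/M_k\}$ is compact and its $k$-th level intervals overlap and cover. Concretely, level-$k$ cylinders of $E$ are intervals of length $\sum_{j>k}(2N_j-2)/M_j = 2/M_k$, spaced $1/M_k$ apart, so consecutive ones overlap and nested covering gives $E=[0,2]$. The set $D_k=\{0,\dots,m-1\}\cup m\{0,\dots,m-1\}\cup(N_k-1-\{0,\dots,m-1\})$ has size $O(\sqrt{N_k})$, and its sumset is all of $\{0,\dots,2N_k-2\}$ when $N_k=m^2$. Hence $C+C=E=[0,2]$.

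\textbf{Zero capacity.} This is the main obstacle. At level $k$, $C$ is covered by $P_k=\prod_{j\le k}|D_j|$ intervals of length about $2/M_k$, and $\log|D_j|\approx\tfrac12\log N_j$. I plan to use the standard criterion via an energy estimate. Take any probability measure $\mu$ on $C$. Some level-$k$ interval carries mass at least $1/P_k$; more usefully, pigeonhole gives
\[
\iint_{|z-w|\le 2/M_k}d\mu\,d\mu\ \ge\ \sum_I\mu(I)^2\ \ge\ 1/P_k .
\]
Then
\[
I(\mu)\ \ge\ \sum_k \bigl(\mu\otimes\mu\{|z-w|\le 2/M_k\}\bigr)\,\log\frac{M_k}{2M_{k-1}}-O(1)\ \ge\ \sum_k \frac{\log N_k - O(1)}{P_k}.
\]
Choose $N_k$ inductively so large that $\log N_k\ge P_{k-1}\cdot|D_k|\approx P_{k-1}\cdot 2\sqrt{N_k}$ fails? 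It does fail, since $\sqrt{N}$ beats $\log N$. So the single-stage sparsity must be improved: either pick $D_k$ of size $O(\log N_k)$ via a multi-scale sparse basis, or spread each stage's $\sqrt{}$-construction over several scales. The fix I would try first is to realize each stage as many consecutive sub-stages with bases $2$-adic blocks, using the known fact that $\{0,\dots,N-1\}$ admits $D$ with $D+D\supseteq$ it and $|D|=O(\sqrt N)$ only. So instead I make $N_k$ tiny relative to its predecessors' product only in logarithmic scale: choose $N_k$ with $\log N_k\ge 2^k P_{k-1}^2$ and use the refined bound $\sum_I\mu(I)^2$ on the intermediate scale $M_{k-1}m_k$. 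That scale has only $P_{k-1}\cdot m$ intervals for the ``$m$-multiples'' part but gaps of relative size $1/m$, giving a contribution of roughly $\tfrac{1}{P_{k-1}}\log m_k\gtrsim 2^k$. Summing, $I(\mu)=\infty$ for every $\mu$, so $\mathrm{cap}(C)=0$. Verifying this intermediate-scale energy bound carefully is the delicate point of the proof.
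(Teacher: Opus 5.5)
There is a genuine gap in the zero-capacity step. Your sumset argument is fine: with $N_k=m^2$ the set $D_k=\{0,\dots,m-1\}\cup m\{0,\dots,m-1\}\cup(N_k-1-\{0,\dots,m-1\})$ does satisfy $D_k+D_k=\{0,\dots,2N_k-2\}$, and the overlapping-cylinder argument gives $C+C=[0,2]$. The zero-capacity step, however, is not merely unverified. The set you build has \emph{positive} capacity for every choice of $N_k$.

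To see this, note that $C$ contains the set $C'$ of points all of whose digits lie in the progression $m_k\{0,\dots,m_k-1\}$. Inside a level-$(k-1)$ cylinder (taking $m_j\ge 3$), $C'$ has $m_k$ subcylinders of length at most $\frac{4}{3M_k}$, placed at spacing $\frac{1}{M_{k-1}m_k}$. So two points that first differ at stage $k$, with digits $t\neq t'$, satisfy $\log\frac{1}{|z-w|}\le\log(2M_{k-1}m_k)-\log|t-t'|$. Take the uniform product measure $\nu$ on $C'$. Averaging over $t\neq t'$ cancels the $\log m_k$, since $m^{-2}\sum_{t\neq t'}\log|t-t'|=\log m-O(1)$. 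One gets
\[
I(\nu)\ \le\ \sum_k\frac{\log(2M_{k-1})+O(1)}{m_1\cdots m_{k-1}}
\ =\ \sum_k\frac{2\sum_{j<k}\log m_j+O(1)}{m_1\cdots m_{k-1}}\ <\ \infty,
\]
however fast $m_k$ grows. Hence $\mathrm{cap}(C)\ge\mathrm{cap}(C')>0$.

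This also shows where your intermediate-scale heuristic goes wrong. Pigeonholing over the $P_{k-1}m_k$ blocks at scale $1/(M_{k-1}m_k)$ yields only $\frac{\log m_k}{P_{k-1}m_k}$, not $\frac{\log m_k}{P_{k-1}}$, and a near-optimal measure simply ignores the two clusters. Nor can you shrink $D_k$ to size $O(\log N_k)$, since $|D_k|^2\ge|D_k+D_k|=2N_k-1$. The obstruction is structural. Demanding that a single set satisfy a full self-sum condition at every stage makes every stage thick. Polarity instead needs infinitely many stages at which the set is covered by very few pieces at a drastically smaller scale.

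The missing idea, which is how the paper argues, is to let two sets share the work and then take their union. The paper constructs $C_1=C(\omega^{(1)})$ and $C_2=C(\omega^{(2)})$ with $\omega^{(1)}_k+\omega^{(2)}_k=1$. At each stage one set is thick (ratio $a_k\to2$) while the other is extremely thin (ratio $b_k\to\infty$, with the block of $c_k$'s equalizing interval lengths). The argument then runs as follows:
\begin{enumerate}
\item The Newhouse Gap Lemma, applied stage by stage, gives $C_1+C_2=[0,2]$.
\item The thin stages give $\mu^h(C_i)=0$ for $h(x)=|\log x|^{-1}$, so each $C_i$ is polar by Erd\H{o}s--Gillis.
\item $C=C_1\cup C_2$ is then a polar Cantor set with $C+C\supseteq C_1+C_2=[0,2]$.
\end{enumerate}

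Your digit framework can be repaired along the same lines. Take $D^{(1)}_k=\{0,N_k-1\}$ and $D^{(2)}_k=\{0,\dots,N_k-1\}$ for odd $k$, swapped for even $k$. Then $D^{(1)}_k+D^{(2)}_k=\{0,\dots,2N_k-2\}$, so your carry argument gives $C_1+C_2=[0,2]$. Your pigeonhole energy bound now does work. At a thin stage $k$ of $C_i$, the number of level-$k$ cylinders is at most $2^k\prod_{j<k}N_j$. So choosing $\log N_k\ge 2^k\prod_{j<k}N_j$ forces $I(\mu)=\infty$ for every probability measure $\mu$ on $C_i$.
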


Moreover, one can improve this statement in the following way:

\begin{theorem}\label{t.2}
For any $n\ge 2$ there exists a Cantor set $C\subseteq [0,1]$ of zero logarithmic capacity such that
$$
 \underbrace{C+C+\cdots +C}_{n\text{ times}}\equiv\{x_1+x_2+\ldots+x_n\ |\ x_i\in C, \ i=1,2, \ldots, n\}=[0,n],
$$
while for any $m<n$ the sum $$\underbrace{C+C+\cdots +C}_{m\text{ times}}$$ is a Cantor set. 
\end{theorem}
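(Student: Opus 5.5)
The plan is an explicit digit construction in base $2$ with superexponentially growing blocks, arranged so that the $n$ summands share the work and each summand is ``free'' only on its own sparse family of blocks. Fix $n\ge 2$ and a rapidly increasing sequence $0=L_0<L_1<L_2<\cdots$, say with $L_{k+1}\ge 2^{L_k}$ (the growth rate is to be tuned later for the capacity estimate). Let $I_k=\{L_{k-1}+1,\dots,L_k\}$ be the $k$-th block of binary positions, and assign $I_k$ to the class $j(k)\equiv k \pmod n$. For $j\in\{1,\dots,n\}$ let $A_j\subset[0,1]$ be the set of $x=\sum_i \varepsilon_i2^{-i}$ whose binary digits vanish outside the blocks of class $j$. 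Let $A_j'=1-A_j$ be the set whose digits equal $1$ outside class $j$. I would take
\[
C=\bigcup_{j=1}^n \bigl(A_j\cup A_j'\bigr),
\]
possibly modified slightly at the level of digit alphabets. Each of these pieces is a Cantor set, so $C$ is a Cantor set.

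\textbf{The $n$-fold sum is an interval.} Every real number in $[0,1]$ has a binary expansion, and splitting it by classes gives $A_1+\cdots+A_n=[0,1]$; symmetrically $A_1'+\cdots+A_n'=[n-1,n]$. For the intermediate ranges I would take $k$ summands from the primed family and $n-k$ from the unprimed one. The resulting set consists of the sums $\sum e_i 2^{-i}$ in which the digit $e_i$ ranges over two consecutive integers, with the pair depending on the class of $i$. The aim is to show that the union over all choices of which classes receive primes covers $[k-1,k+1]$. If this bookkeeping fails, the fallback is to enlarge the alphabet: use base $b=n+1$ and let the free digits range over $\{0,\dots,b-1\}$. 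Then one summand is free on each block while the others contribute constant digits. The constants are chosen so that $\sum_i e_i b^{-i}$ with $e_i$ ranging over a full residue window covers $[0,n]$, in the spirit of Newhouse's gap lemma applied blockwise. \textbf{I expect this covering step to be the main obstacle}, because the alphabet must be full enough that $n$ copies tile $[0,n]$ yet sparse enough that $n-1$ copies do not.

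\textbf{Fewer than $n$ copies give a Cantor set.} Any sum of $m<n$ copies of $C$ is a finite union of sets $B_1+\cdots+B_m$ with each $B_i\in\{A_j,A_j'\}$. In each such sum some class $j_0$ is free in no summand. On the blocks of class $j_0$, every summand then has constant digits, so the sum's digits there are determined up to carries from the later blocks. Since $|I_k|\to\infty$, each sum is covered at stage $k$ by at most $2^{m(L_{k-1})}$ intervals of length about $m2^{-L_k}$. This count is tiny compared to $2^{L_k}$, so the sum is nowhere dense with zero measure. It is compact, and it is perfect because perturbing a free digit deep in the expansion moves the point by an arbitrarily small nonzero amount. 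Hence every $m$-fold sum is a Cantor set.

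\textbf{Zero capacity.} At the end of a block of class $j$, the set $C$ is covered by $N_k\le 2n\,2^{L_{k-1}}$ intervals of length $\delta_k=2^{-L_k}$. For any $\mu\in\mathcal M(C)$, grouping pairs $(z,w)$ that lie in the same covering interval gives
\[
I(\mu)\ \ge\ \frac{1}{N_k}\log\frac1{\delta_k}-O(1).
\]
This follows because $\sum_i\mu(J_i)^2\ge 1/N_k$ by Cauchy--Schwarz and $\log(1/|z-w|)\ge -\log 2$ everywhere. The growth condition $L_k/2^{L_{k-1}}\to\infty$, for example $L_k=2^{2^{L_{k-1}}}$, forces $I(\mu)=+\infty$ for every $\mu$. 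Therefore $V(C)=+\infty$ and $\operatorname{cap}(C)=0$. The same covering estimate with faster growth of $L_k$ also yields zero $h$-measure for any gauge $h$.
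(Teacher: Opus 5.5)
Your construction works, and it is genuinely different from the paper's. As written, though, the proof has two gaps, one of them at the central step.

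\textbf{First gap: $nC=[0,n]$ is never proved.} You state an aim and a fallback. The aim is false as formulated: for fixed $k$, the choices with exactly $k$ primed summands need not cover $[k-1,k+1]\cap[0,n]$. The missing computation is short. Put $p_j=\sum_{i\ \text{in class}\ j}2^{-i}\in(0,1)$, so that $\sum_j p_j=1$. For $T\subseteq\{1,\dots,n\}$ let $X_T=\sum_{j\in T}A_j$ and $p_T=\sum_{j\in T}p_j$. Flipping the free digits shows $p_S-X_S=X_S$, and $X_S+X_{S^c}=A_1+\cdots+A_n=[0,1]$. Hence, if $S$ is the set of primed classes,
\[
\sum_{j\in S}A_j'+\sum_{j\notin S}A_j=|S|-X_S+X_{S^c}=|S|-p_S+[0,1]=[q_S,q_S+1],\qquad q_S=\sum_{j\in S}(1-p_j).
\]
For $n=2$, $k=1$ this gives $[\min_j p_j,\,1+\max_j p_j]\subsetneq[0,2]$, which is why your fixed-$k$ aim fails. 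Instead, follow the chain $S_t=\{1,\dots,t\}$, $t=0,\dots,n$. The left endpoints increase from $0$ to $n-1$ in steps $1-p_{t+1}<1$, so consecutive unit intervals overlap and their union is $[0,n]$. Together with $C\subseteq[0,1]$ this gives $nC=[0,n]$, and the base-$(n+1)$ fallback is unnecessary.

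\textbf{Second gap: the capacity step rests on a false covering claim.} At the end of a block $I_k$ of class $j$, the pieces $A_j$ and $A_j'$ realize all $2^{|I_k|}$ digit patterns on $I_k$. So $C$ needs on the order of $2^{|I_k|}$ intervals of length $2^{-L_k}$, not $2n\,2^{L_{k-1}}$. No single scale makes all of $C$ thin, so your Cauchy--Schwarz bound cannot be applied directly to an arbitrary $\mu\in\mathcal M(C)$.

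Repair it piece by piece. For $P\in\{A_j,A_j'\}$, use blocks of class $\neq j$. There $P$ lies in at most $2^{L_{k-1}}$ disjoint intervals of length $2^{-L_k}$, so $I(\nu)\ge L_k\log 2/2^{L_{k-1}}\to\infty$ for every $\nu\in\mathcal M(P)$. Now take $\mu\in\mathcal M(C)$ and pick $P$ with $\mu(P)>0$. Since $\log(1/|z-w|)\ge 0$ on $[0,1]^2$, we get $I(\mu)\ge\mu(P)^2\,I\bigl(\mu|_P/\mu(P)\bigr)=+\infty$.

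Your $m<n$ argument is fine.

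\textbf{Comparison with the paper.} With these repairs your route replaces the paper's machinery by elementary means. The paper takes $C=C_1\cup\cdots\cup C_n$ built from non-stationary expanding maps and gets the interval from the Newhouse gap lemma applied blockwise. It gets zero capacity indirectly, from zero $h$-measure for $h(x)=1/|\log x|$ via Erd\H{o}s--Gillis. Your exact digit-splitting identity and direct energy bound avoid both tools.
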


    A Cantor set  of zero logarithmic capacity must have zero Hausdorff dimension.  Notice that in the case when logarithmic capacity in Theorem \ref{t.2} is replaced  by Hausdorff dimension, the similar questions were studied by Schmeling and Shmerkin   \cite{SS}:

\begin{theorem}[\cite{SS}]
For any nondecreasing sequence $\{\alpha_k\}_{k\in \mathbb{N}}$, with $\alpha_k$ taking values in $[0, 1]$, there
exists a Cantor set $C$ such that 
$$
\text{\rm dim}_H \underbrace{C+C+\cdots +C}_{k\text{ times}}=\alpha_k
$$
for each $k\in \mathbb{N}$. Moreover, if $\alpha_k=1$ for some $k$, one
can also require that
the corresponding sumset contains an interval.
\end{theorem}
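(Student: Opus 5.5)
The statement above is the result of Schmeling and Shmerkin \cite{SS}, which the paper cites rather than proves; here is how I would prove it. I would construct $C$ as a set of numbers $x=\sum_{i\ge1} d_i b^{-i}$ in a fixed base $b$, where the admissible digit set $D_i\subset\{0,\dots,b-1\}$ at position $i$ depends only on $i$. The positions are grouped into consecutive blocks $B_1,B_2,\dots$ whose lengths grow very fast, for instance $|B_{\ell+1}|\ge 2^{\ell}\sum_{m\le \ell}|B_m|$. Then $k$-fold sums of $C$ are, up to carries, again sets of this form: the digit set at position $i$ is replaced by the sumset $D_i+\dots+D_i$ ($k$ times). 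The main device is a family of ``profile'' blocks. For a finite nondecreasing target vector $(\beta_1,\dots,\beta_K)$ I want a pattern of digit sets over a block of length $L$ such that the $k$-fold sumset restricted to the block has about $b^{\beta_k L}$ admissible words (up to factor $e^{o(L)}$), for every $k\le K$ simultaneously.

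To build a profile block, I would first use elementary patterns. Take $b=2$ and let the digit be free only at positions divisible by $j$ and $0$ elsewhere. A $k$-fold sum then has ``digits'' in $\{0,\dots,k\}$ on a sparse lattice, and these only carry into the $\lfloor\log_2 k\rfloor$ following zero positions. So the $k$-fold sum has exponential growth rate $\min(1,\log_2(k+1)/j)$ per position, and it fills everything once $k\ge 2^j-1$. Concatenating sub-blocks of such patterns in suitable proportions gives growth rates $\sum_j p_j\min(1,\log_2(k+1)/j)$. These form concave-type profiles in $k$. To reach an arbitrary nondecreasing $(\alpha_k)$, I would enlarge the family by thinning the free positions randomly or periodically with density $\rho$, and by mixing with fully zero stretches. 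Then a short monotone interpolation argument approximates any nondecreasing vector on $\{1,\dots,K\}$ within $1/K$.

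Next I would realize the given $(\alpha_k)$. Block $B_\ell$ uses the profile approximating $(\alpha_1,\dots,\alpha_\ell)$ within $1/\ell$, and on coordinates $k>\ell$ it gives at most $\alpha_k+1/\ell$; this is possible because $\alpha$ is nondecreasing and bounded by $1$. Since block lengths grow superexponentially, the normalized count up to the end of $B_\ell$ is dominated by $B_\ell$. For the upper bound on $\dim_H$ of the $k$-fold sum, I cover at scales $b^{-n}$ for $n$ at block ends, counting words, which gives $\liminf\le\alpha_k$. For the lower bound, I put the uniform measure on the tree of admissible sum-words and apply the mass distribution principle. This needs the local count at every scale $n$, including the middle of a block, to be at least $(\alpha_k-o(1))n$. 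That holds because the profiles are built periodically with short period, so the growth is uniform along each block, and the previous blocks are negligible. If $\alpha_k=1$ for some $k$, I would use from some block on only patterns whose $k$-fold digit sumset covers every residue with all carries in an interval of digits. The $k$-fold sum then contains an interval by the standard digit argument: every $y$ in an interval has a representation, constructed greedily digit by digit.

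The step I expect to be the main obstacle is controlling carries in the lower bound for the dimension. Distinct digit-sum words can represent the same real number, so the map from words to points is not injective, and neighbouring cylinders overlap. I would handle this as follows. The zero buffers in each pattern make the carry length at most $O(\log k)$. Hence each point has at most $e^{O(\log k)\cdot(\#\text{periods})}=e^{o(n)}$ preimages at scale $n$, provided the period length tends to infinity with $\ell$. A cylinder of length $b^{-n}$ also meets only boundedly many neighbours. Together these keep the mass distribution estimate intact up to a subexponential factor.
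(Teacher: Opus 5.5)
The paper only cites this result from Schmeling--Shmerkin, so I am judging your argument on its own terms. It has a structural gap: the claim that ``a short monotone interpolation argument approximates any nondecreasing vector on $\{1,\dots,K\}$'' is false, and no choice of profiles can repair it. In any block pattern, the $(k+m)$-fold digit-sum words are sums of $k$-fold and $m$-fold words. Hence $N_{k+m}\le N_kN_m$, and every profile obeys $\beta_{k+m}\le\beta_k+\beta_m+o(1)$, in particular $\beta_k\le k\beta_1+o(1)$. Your patterns $\min(1,\log_2(k+1)/j)$, their thinnings, and their mixtures with zero stretches are all of this type.

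Worse, the obstruction does not depend on the profiles at all. Any set $C=\{\sum_i d_ib^{-i}: d_i\in D_i\}$ with $D_i$ depending only on $i$ has $\dim_H C=\underline{\dim}_B C$. To see this, put the product of the uniform measures on the $D_i$ on $C$. A level-$n$ cylinder has mass $(\prod_{i\le n}|D_i|)^{-1}$, and a ball of radius $b^{-n}$ meets boundedly many cylinders, so the mass distribution principle gives $\dim_H C\ge\liminf_n \frac{\sum_{i\le n}\log|D_i|}{n\log b}=\underline{\dim}_B C$. Combined with $N_{k\delta}(kC)\le N_\delta(C)^k$, this yields
\[
\dim_H(kC)\le\underline{\dim}_B(kC)\le k\,\underline{\dim}_B C=k\dim_H C .
\]
So every target with $\alpha_k>k\alpha_1$ is out of reach. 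For example, $\alpha_1=0$, $\alpha_k=1$ for $k\ge 2$ (the Hausdorff-dimension form of Theorem~\ref{t.first}) would force all $\alpha_k=0$, and the ``moreover'' clause fails for the same reason. Your design also makes growth uniform along blocks, so $\overline{\dim}_B(kC)=\alpha_k$ as well. Since $\overline{\dim}_B(A+B)\le\overline{\dim}_B A+\overline{\dim}_B B$, this would restrict you to subadditive sequences.

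The missing idea is inhomogeneity. Whenever $\alpha_k>k\alpha_1$ you need $\dim_H C<\underline{\dim}_B C$, so $C$ cannot be a single homogeneous digit set. The standard device, which this paper itself uses to prove Theorem~\ref{t.all}(I), is to take $C$ as a union of homogeneous pieces whose thick and thin scale ranges are staggered. Then each piece, and each sum omitting some piece, is small at infinitely many scales, while sums containing all the pieces are large at every scale. Countable stability then gives $\dim_H(kC)=\sup\dim_H(C_{i_1}+\dots+C_{i_k})$ over tuples.

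For a general nondecreasing sequence, one route is to write $\alpha_k=\max_{j\le k}\alpha_j$. For each $j$, build a family of $j$ pieces whose full mixed sum has dimension $\alpha_j$ while sums missing a member have dimension $0$. Then take a countable union accumulating at a point, so that $C$ stays compact. The real work, absent from your plan, is controlling every mixed sum, within and across families and with repetitions, so that no $k$-fold sum exceeds $\alpha_k$. Your digit-counting and carry estimates can serve as the local tool for each piece, but the global architecture has to change.
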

In fact, \cite{SS} contains even stronger result that allows to prescribe not only Hausdorff dimension of the sumsets, but also lower and upper box counting dimensions, modulo some explicit quantitative restrictions.  

It is interesting to notice that spectra of ergodic Schr\"odinger operators cannot have zero logarithmic capacity \cite{S}, but can have zero lower box counting dimension \cite{DF, DFG}. This motivates the next series of results:

\subsection{Sums of sets of zero lower box counting dimension}

Notice that one cannot just replace a set of zero Hausdorff dimension in Theorem \ref{t.first} and Theorem \ref{t.2} by a set of zero lower box counting dimension. Indeed, the Minkowski sum of any number of copies of a Cantor set of zero lower box counting dimension must be a Cantor set of zero lower box counting dimension. Nevertheless, the following statement holds:

\begin{theorem}\label{t: multi-sum lower box}
    Let $n\geq 2$. 
    There exist $n$-many Cantor sets $C_1,\dots, C_{n}$ such that 
\begin{equation}
   \underline{\dim}_B\left(C_{i_1}+\cdots +C_{i_k}\right)=0
\end{equation}
for all $k<n$ and $(i_1,\dots, i_k)\in \{1,\dots, n\}^k$, 
and yet 
\begin{equation}
    C_1+\cdots+ C_{n}=[0,n].
\end{equation}
\end{theorem}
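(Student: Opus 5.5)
We construct all $n$ sets from a single digit expansion in a fixed base, assigning digit positions to the sets in long blocks. The idea is that whenever $n-1$ or fewer summands are present, some very long block of scales carries only the digit $0$ in every summand. Such a block forces small lower box dimension. When all $n$ sets are present, every block is ``full''. Since the statement allows repeated indices $(i_1,\dots,i_k)$, we also need: when summand $j$ is missing, block $j$ contributes nothing, no matter how many copies of the other sets occur.

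Fix a base $b\ge 2$ and a rapidly increasing sequence $0=N_0<N_1<N_2<\cdots$ with $N_{t}/N_{t+1}\to 0$. Partition the positions $\{N_{t-1}+1,\dots,N_t\}$ into the $t$-th block and assign it to the set $C_{j}$ with $j\equiv t \pmod n$. Let $C_j$ be the set of $x=\sum_{m\ge1} d_m b^{-m}$ where $d_m\in\{0,1,\dots,b-1\}$ is free if position $m$ lies in a block assigned to $j$, and $d_m=0$ otherwise. Then $C_1+\cdots+C_n$ consists of all base-$b$ expansions with arbitrary digits, and no carries occur, so the sum is exactly $[0,1]$. Rescaling each $C_j$ by $n$ (or allowing the digit $b-1$ repeated via endpoints) gives $[0,n]$. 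More simply, rescale so that $C_j\subset[0,1]$ contains $0$ and $1$, and then $C_1+\dots+C_n=[0,n]$ after an affine adjustment. One must check that each $C_j$ is a Cantor set. It is compact and totally disconnected, because infinitely many blocks have digits forced to $0$. It is perfect, because infinitely many positions are free.

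For the dimension claim, take $k<n$ and indices $i_1,\dots,i_k$, and pick $j\notin\{i_1,\dots,i_k\}$. The sum $S=C_{i_1}+\cdots+C_{i_k}$ has all digits in the blocks assigned to $j$ equal to $0$. There are carries, but each position now has digit sum at most $k(b-1)$. Hence $S$ is contained in a set of expansions with ``digits'' in $\{0,\dots,k(b-1)\}$ that vanish on blocks assigned to $j$. Take $t\equiv j$ and scale $\delta=b^{-N_t}$. The number of $\delta$-intervals needed to cover $S$ is at most roughly $(k(b-1)+1)^{N_{t-1}}\cdot c$, with a constant $c$ absorbing the carry spillover, which is bounded because the tail sum lies in an interval of length $\lesssim k b^{-N_t}$. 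Thus
$$\frac{\log N_\delta(S)}{-\log\delta}\lesssim \frac{N_{t-1}\log(kb)}{N_t\log b}\to 0,$$
and so $\underline{\dim}_B S=0$. The same computation with $k=1$ shows each $C_j$ itself has zero lower box dimension.

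The main technical point is the covering estimate in the presence of carries. Carries mean that zero digits in block $j$ of the summands do not literally give zero digits in the sum. We handle this by working with the non-normalized representation $\sum e_m b^{-m}$, $e_m\le k(b-1)$. All positions beyond $N_{t-1}$ up to $N_t$ contribute $0$, and the tail beyond $N_t$ is at most $k b^{-N_t}$. The number of distinct prefix values is at most $(k(b-1)+1)^{N_{t-1}}$, and each prefix value together with its tail lies in an interval of length $O(kb^{-N_t})$. This gives the bound above, and nothing else is delicate.
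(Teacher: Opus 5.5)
Your proof is correct, and it takes a genuinely different, more elementary route than the paper.

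\textbf{The paper's route.} The paper builds each $C(\omega^{(i)})$ as a non-stationary Cantor set generated by the maps $E_d$. It alternates ``thick'' stages ($a_k\to 2$) with ``thin'' stages ($b_k\to\infty$). Each thin stage is followed by a renormalizing $c_k$-block, chosen via (\ref{e: products equal}) so that all sets have components of equal length after every stage. The identity $C(\omega^{(1)})+\cdots+C(\omega^{(n)})=[0,n]$ then follows stage by stage from the Newhouse Gap Lemma, because exactly one summand is thick at each stage. Zero lower box dimension comes from stages at which all chosen summands are thin.

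\textbf{Your route.} You replace thickness by exact digit complementarity. Since the digit positions are partitioned among the $C_j$, the full sum is $[0,1]$ with no carries at all. Your long block where every summand has forced zero digits plays the role of the paper's thin stage. You neutralize carries by passing to non-normalized digits in $\{0,\dots,k(b-1)\}$. The same construction, with $N_t$ chosen large enough relative to $h$ and $N_{t-1}$, also gives zero $\mu^h$-measure. So your approach would cover part (II) of Theorem \ref{t.all} as well.

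\textbf{What the paper's approach buys.} First, normalization. Each of the paper's sets lies in $[0,1]$ and contains $0$ and $1$. This is exactly what makes $C=C_1\cup\cdots\cup C_n$ satisfy $nC=[0,n]$ in the proofs of Theorem \ref{t.all}(I) and Theorem \ref{t.2}. Your dilated sets $nC_j$ have wildly different convex hulls: the owner of the first block has hull of length close to $n$, while the others have tiny hulls. So they cannot be fed into that union trick. Second, robustness. The thickness mechanism needs only one thick summand per stage rather than an exact partition of positions, which suits the uncountable family in part (III).

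\textbf{One sentence to drop.} Remove the remark that you could ``rescale so that $C_j\subset[0,1]$ contains $0$ and $1$.'' Rescaling the $C_j$ by different factors destroys the digit alignment that your sum computation rests on. No common affine map can give all the $C_j$ the same hull. The uniform dilation by $n$ is all this statement needs, since it imposes no containment in $[0,1]$.
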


Also, it is not hard to present a finite collection of Cantor sets of zero lower box counting dimension such that the sum of any two of them is a finite union of intervals. With a bit of effort, one can construct an infinite collection of Cantor sets with the same property. What is more surprising is that it is possible to find an {\it uncountable} collection of such Cantor sets:

\begin{theorem}\label{t: uncountable lower box}
    There is an uncountable collection of Cantor sets $\mathcal C$ of $[0,1]$ such that the following hold:
\begin{itemize}
    \item $\underline{\dim}_B(C)=0$ for all $C\in \mathcal C$
    \item $C+C'$ is a union of finitely many  intervals
   for any $C, C'\in \mathcal C$ with $C\neq C'$.
\end{itemize}
\end{theorem}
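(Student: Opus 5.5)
We will construct the family from digit expansions along a rapidly growing sequence of scales, with each set of the family indexed by an infinite binary sequence. The basic mechanism is the standard one behind Theorem~\ref{t: multi-sum lower box}. Fix a base sequence $N_1<N_2<\cdots$ growing very fast, say $N_{k+1}\ge 2^{N_k}$ or faster. Write $x\in[0,1]$ in base $2$ and split the binary digits into consecutive blocks $B_1,B_2,\dots$ with $|B_k|=N_k-N_{k-1}$. For a set $S\subseteq\mathbb N$ of block indices, let $C_S$ be the set of $x$ whose binary digits vanish on every block $B_k$ with $k\notin S$ and are free on blocks with $k\in S$.

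Two facts drive the construction. First, if $S\cup S'=\mathbb N$, then $C_S+C_{S'}\supseteq[0,1]$, since any $x$ splits into its digits on $S$-blocks plus its digits on the remaining blocks. In fact $C_S+C_{S'}$ is an interval: it is contained in $[0,2]$, and one checks it equals $[0,\max C_S+\max C_{S'}]$, possibly after a finite-union adjustment. Second, if both $S$ and $\mathbb N\setminus S$ contain infinitely many $k$, then at scale $2^{-N_k}$ with $k\notin S$ and $N_k\gg N_{k-1}$, the set $C_S$ is covered by $2^{N_{k-1}}$ intervals of length $2^{-N_k}$. So $\underline{\dim}_B C_S\le\liminf N_{k-1}/N_k=0$.

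We now want uncountably many sets $S_\alpha$ such that each has infinite complement and any two distinct ones cover all but finitely many indices. Exact covering $S_\alpha\cup S_\beta=\mathbb N$ for all pairs is impossible: with three sets, an index missed by one set would need to lie in both others, so the complements would have to be pairwise disjoint, and each complement infinite. But pairwise disjoint infinite complements are exactly what an \emph{almost disjoint family} gives, and there is an uncountable almost disjoint family $\{A_\alpha\}$ of infinite subsets of $\mathbb N$. The standard example takes branches of the binary tree, identifying $\mathbb N$ with finite $0$--$1$ words. We set $S_\alpha=\mathbb N\setminus A_\alpha$, which is also infinite. Then for $\alpha\neq\beta$ the index set $F=A_\alpha\cap A_\beta$ is finite, and every block outside $F$ is free in $C_{S_\alpha}$ or in $C_{S_\beta}$.

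It remains to show that $C_{S_\alpha}+C_{S_\beta}$ is a finite union of intervals. Split the digits according to whether a block lies in $F$. Both sets force zeros on the blocks in $F$, so the sum equals $\{x+y\}$ with $x,y$ free on their respective blocks outside $F$. We write $C_{S_\alpha}+C_{S_\beta}=\bigcup_{w}(w+I)$, where $w$ runs over finitely many carry patterns. The main work, and the main obstacle, is verifying that the sum over the blocks outside $F$ fills a full interval within each cylinder. When a block lies in only one of $S_\alpha$, $S_\beta$ there are no carries. When a block lies in both, the free digits give carries, and these only enlarge the set. To avoid carry headaches, we could instead use blocks in base $3$, or digits $\{0,1\}$ in base $2$ with sums landing in $\{0,1,2\}$. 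The cleanest choice is to work with $\sum d_i2^{-i}$ where $d_i\in\{0,1\}$, so that the sum has digits in $\{0,1,2\}$. Since $\{0,1,2\}$-expansions in base $2$ of the constrained form cover intervals, we get: $C_{S_\alpha}+C_{S_\beta}$ is the set of $\sum e_i2^{-i}$ with $e_i\in\{0,1,2\}$ on blocks outside $F$ (and some $e_i\in\{0,1\}$), and $e_i=0$ on blocks in $F$. The positions outside $F$ form a cofinite set of positions, so the tail beyond the last forced-zero block contributes a full interval $[0,2^{-M}\cdot c]$ with $c\ge1$. The first $M$ digits range over a finite set, so the sum is a finite union of intervals. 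Finally, each $C_{S_\alpha}$ is a Cantor set, being perfect and totally disconnected since infinitely many blocks are forced to zero. Uncountability comes from the almost disjoint family, and distinct $\alpha$ give distinct sets.
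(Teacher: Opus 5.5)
Your proposal is correct. It shares the paper's combinatorial skeleton but realizes it by a genuinely different mechanism. The shared skeleton is to index the family by an uncountable almost disjoint family $\{A_\alpha\}$, make each set ``thin'' on the blocks in $A_\alpha$ (giving $\underline{\dim}_B=0$) and ``thick'' elsewhere. Then for $\alpha\neq\beta$ every block beyond the finite set $A_\alpha\cap A_\beta$ is thick for at least one of the two sets.

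The paper's route is heavier. It uses non-stationary expanding maps $E_{a_k}$ with $a_k\to 2$ (thickness $\frac{1}{a_k-2}$) and $E_{b_k}$ with $b_k\to\infty$, and gets the sum at each level from the Newhouse Gap Lemma. It also needs the auxiliary sequence $c_k$ with $a_k^{\ell_{2k-1}+\ell_{2k}}=b_k^{\ell_{2k-1}}c_k^{\ell_{2k}}$, so that all sets have components of equal length at every stage and the level-by-level induction goes through.

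In your binary model, a free block is the degenerate case $d=2$ and a zero block keeps a single branch. Scales align automatically because all sets use the same digit positions. The sumset is computed exactly as $\{\sum_i e_i2^{-i}: e_i\in E_i\}$ with $E_i\in\{\{0\},\{0,1\},\{0,1,2\}\}$, so thickness and the Gap Lemma are not needed. You also only need plain almost disjointness, not the stronger property of the family quoted from Jech. Your construction adapts to zero $h$-measure by choosing $N_k$ with $2^{N_{k-1}}h(2^{-N_k})\to 0$. In exchange, the paper's thickness framework is a robust tool that handles parts (I)--(III) of Theorem~\ref{t.all} uniformly.

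The one step you should write out rather than assert is that the tail $\sum_{i>M}E_i2^{-i}$ equals the interval $[0,c\,2^{-M}]$, not merely that it contains $[0,2^{-M}]$. A greedy digit choice does this. It uses that each $E_i$ with $i>M$ is $\{0,1\}$ or $\{0,1,2\}$, and that every tail maximum $c_m=\sum_{j\geq 1}\max E_{m+j}\,2^{-j}$ is at least $1$. These facts keep the remainder after each greedy step in $[0,c_{m+1}2^{-(m+1)}]$.
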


\subsection{Sums of sets of zero generalized Hausdorff measure}\label{ss.Haus}

We can also show that in some sense no degree of ``smallness'' in terms of generalized Hausdorff measure of Cantor sets can guarantee ``smallness'' of their sum. Namely, let $h:[0, +\infty)\to [0, +\infty)$ be any continuous monotonically increasing function with $h(0)=0$. We will call such a function ``gauge function'', following the terminology from \cite{M}; notice that the terms ``measuring function'' and ``dimension function'' have been used in the literature as well, e.g. see \cite{AG} or \cite{GMS}. For a given gauge function $h$, a Cantor set $C\subseteq [0,1]$, and $\delta>0$ one sets
\[\mu_\delta^{h}(C)=\inf\bigg\{\sum\limits_i h(|U_i|):\bigcup\limits_i U_i\supseteq C,\;U_i\text{ is an interval},|U_i|<\delta\bigg\},\]
and defines the Hausdorff $h$-measure of $C$ as 
\[\ \mu^{h}(C)=\lim\limits_{\delta\rightarrow0}\mu_\delta^h(C). \]
For example, if $h(x)=x^d$, then $\mu^{h}(C)$ is the $d$-dimensional Hausdorff measure of $C$. 

There is an interesting interplay between the logarithmic capacity of a set $C$ and its Hausdorff $h$-measures for different classes of the gauge functions. For example, if $\mathrm{cap}(C) = 0$ and $\int_0^1\frac{h(t)}{t}dt<\infty$, then $\mu^h(C)=0$, e.g. see \cite{Fr}. In the opposite direction, Ursell proved \cite{U} that if $\liminf_{t\to 0}h(t)|\log t|=0$, then there exists a set $C=C_h$ such that $\mu^h(C)<\infty$ and $\mathrm{cap}(C) >0$. See \cite[Chapter 3]{Q2} for a detailed history of the question, and \cite{KQ, Q1} for some related results for non-compact sets.

It turns out that in all of our results stated above one can require the Cantor sets to have zero Hausdorff $h$-measure with respect to an arbitrary given in advance gauge function $h$:
\begin{theorem}\label{t.all}
For any given  gauge function $h:[0, +\infty)\to [0, +\infty)$ the following statements hold:
\begin{enumerate}[label=(\Roman*)]
\item For any $n\ge 2$, there exists a Cantor set $C\subseteq [0,1]$ such that $\mu^h(C)=0$, and 
$$
 \underbrace{C+C+\cdots +C}_{n\text{ times}}\text{ is an interval.}
$$
while for any $m<n$ the sum $$\underbrace{C+C+\cdots +C}_{m\text{ times}}$$ is a Cantor set. 
\item For any $n\geq 2$, there exist $n$-many Cantor sets $C_1,\dots, C_{n}$ such that $\mu^h(C_i)=0$ for each $i=1, \ldots, n,$ and
\begin{equation}      \underline{\dim}_B\left(C_{i_1}+\cdots +C_{i_k}\right)=0 
\end{equation}
for all $k<n$ and $(i_1,\dots, i_k)\in \{1,\dots, n\}^k$, 
and yet 
\begin{equation}
    C_1+\cdots+ C_{n}=[0,n].
\end{equation}
\item There is an uncountable collection of Cantor sets $\mathcal C$ of $[0,1]$ such that the following hold:
\begin{itemize}
    \item $\underline{\dim}_B(C)=0$ and $\mu^h(C)=0$ for all $C\in \mathcal C$;
    \item $C+C'$ is a union of finitely many  intervals
   for any $C, C'\in \mathcal C$ with $C\neq C'$.
\end{itemize}
\end{enumerate}
\end{theorem}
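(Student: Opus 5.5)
The plan is to prove Theorem~\ref{t.all} by reusing the constructions behind Theorems~\ref{t.2}, \ref{t: multi-sum lower box} and \ref{t: uncountable lower box}. All of them will be built from a common scheme: Cantor sets defined by digit restrictions in a mixed-radix (or multi-scale) expansion, governed by an increasing sequence of ``scales'' $N_1<N_2<\dots$ (block lengths or branching parameters). The key observation is that each of these constructions has a free parameter: the scale sequence can grow as fast as we like without destroying the sumset property, since that property is combinatorial and holds at every scale. Given a gauge function $h$, we therefore choose the scales inductively so fast that $\mu^h(C)=0$.

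\textbf{Step 1: the combinatorial sumset property.} At stage $k$, $C$ (or each $C_i$) is covered by $M_k$ intervals of length $\ell_k$. The digit sets used at each stage are arranged so that:
\begin{itemize}
\item the $n$-fold sum of the digit sets covers a full residue range, giving the interval $[0,n]$ after nested intersection (via a standard compactness argument on the nested covers);
\item for fewer summands, a gap survives at infinitely many stages, so the smaller sumsets are Cantor sets, and have zero lower box dimension in parts (II) and (III).
\end{itemize}
For (III), the uncountable family is indexed by $\omega\in\{0,1\}^{\mathbb N}$ (an almost-disjoint or tree family). Two distinct sets differ from some stage on, so at all later stages one of them carries the ``complementary'' digits. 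Together with a Newhouse-type or finite-stage argument, this makes $C+C'$ a finite union of intervals. The gauge condition will not interfere with this step, because it only concerns how many new pieces appear per stage relative to their lengths.

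\textbf{Step 2: the gauge estimate.} Since $\mu^h(C)\le \liminf_k M_k\,h(\ell_k)$, it suffices to arrange $M_k h(\ell_k)\to 0$. The count $M_k$ is determined by the choices made up to stage $k-1$ together with the bounded number of digits allowed at stage $k$. The length $\ell_k$, on the other hand, can be made arbitrarily small by choosing $N_k$ large, typically via a ``long block'' where almost all digits are frozen. Because $h$ is continuous with $h(0)=0$, we choose $N_k$ with $h(\ell_k)<1/(k M_k)$. For (II) and (III) the same choice also forces $\log(\text{count})/|\log \text{length}|\to 0$ along a subsequence, which gives $\underline{\dim}_B=0$. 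For (III) we need this uniformly over the uncountable family, which is fine because the scales are shared by all members and the counts are bounded independently of $\omega$.

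\textbf{Main obstacle.} The hard part is to check that the fast-growing scales forced by $h$ are compatible with the covering property of the full sum. Freezing digits on long blocks must not create gaps in the $n$-fold sum. I would handle this by letting the ``frozen'' blocks be shared among the summands in a staggered way: during each long block, some summand still carries a full digit set and fills the gaps. This is the same mechanism used for zero logarithmic capacity in Theorem~\ref{t.2}. Zero capacity already requires super-exponentially decaying lengths relative to counts, and the requirement here is just a stronger, $h$-dependent growth rate. So once that mechanism is isolated as a lemma with the scales as free parameters, all three parts follow by the choice in Step 2.
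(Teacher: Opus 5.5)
Your plan has a genuine gap in part (I). There all $n$ summands are the same set $C$, so the staggering you rely on (``some summand still carries a full digit set'' while the others are thin) is unavailable: at a stage where $C$ is thin, every copy is thin.

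In fact your Step 2 bound cannot work for (I) under any construction. A cover of $C$ by $N$ intervals of length $\delta$ yields a cover of $nC$ by $N^n$ intervals of length $n\delta$. So if $nC$ is an interval, then $N_\delta(C)\gtrsim\delta^{-1/n}$ at every scale, and for $h(t)=t^{1/(2n)}$ you get $M_kh(\ell_k)\gtrsim\ell_k^{-1/(2n)}\to\infty$. Zero $h$-measure in (I) has to come from covers that use different scales on different parts of $C$.

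The missing idea, which is the paper's, is to take $C=C_1\cup\cdots\cup C_n$ with $C_i$ from (II). Then $[0,n]=C_1+\cdots+C_n\subseteq nC\subseteq[0,n]$. For $m<n$, $mC$ is the finite union of the sets $C_{i_1}+\cdots+C_{i_m}$, each perfect and of zero lower box dimension, hence a Cantor set. Finally, $\mu^h(C)\le\sum_i\mu^h(C_i)=0$, each $C_i$ being covered at its own thin scales.

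For (II) and (III) your outline has the paper's architecture: thick and thin stages, thin scales chosen after all earlier data, and an almost disjoint family for (III). What (III) needs is that two members share only finitely many thin stages, not that they ``differ from some stage on''. However, the core step is deferred rather than proved. You appeal to the constructions behind Theorems~\ref{t.2}, \ref{t: multi-sum lower box} and \ref{t: uncountable lower box}. In this paper those are corollaries of the statement being proved (Theorem~\ref{t.2} comes from (I) with $h(x)=|\log x|^{-1}$ and Erd\"os--Gillis), so the appeal is circular. The lemma you leave open must contain the following.
\begin{enumerate}
\item A phase pattern such as the paper's: exactly one thick index per stage, and each index thick infinitely often. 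Then every $(n-1)$-tuple, repetitions allowed, is simultaneously thin infinitely often. A gap surviving at infinitely many stages does not give $\underline{\dim}_B=0$.
\item The one-stage covering and its propagation through the stages. In the paper the covering is the Gap Lemma with $\frac{1}{a_k-2}\cdot\frac{1}{b_k-2}>1$, with $a_k$ chosen after $b_k$. Propagation requires all summands' level-$k$ components to have a common length. Thick and thin blocks contract at different rates, and the paper equalizes them with the renormalizing block, $a_k^{\ell_{2k-1}+\ell_{2k}}=b_k^{\ell_{2k-1}}c_k^{\ell_{2k}}$. The same property is what makes $C(\omega)+C(\omega')$ a finite union of translates of $[0,2]$ in (III).
\end{enumerate}
Your mixed-radix idea would settle (ii) more elementarily if you use one base $N_k$ for all sets at stage $k$, with thick digits $\{0,\dots,N_k-1\}$ and thin digits $\{0,N_k-1\}$. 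Then $\{0,\dots,N-1\}+\{0,N-1,2(N-1),\dots,(n-1)(N-1)\}=\{0,\dots,n(N-1)\}$ gives $C_1+\cdots+C_n=[0,n]$ digitwise, and lengths agree automatically. But this must be written out.

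Also, $h(\ell_k)<1/(kM_k)$ does not force $\log M_k/|\log\ell_k|\to0$ (take $h(t)=t$). The box-dimension condition must be imposed separately, as the paper does with $\ell_{2k-1}/(\ell_1+\cdots+\ell_{2k-1})\to1$.
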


\section*{Acknowledgements}
We would like to thank David Damanik for bringing our attention to the question. The project was partly supported by NSF grant DMS-2247966.
 
\section{Sums of Cantor Sets with Zero Lower Box-counting Dimension}

We begin by proving parts (II) and (III) of Theorem \ref{t.all}. The rest of the results will follow from these parts of the theorem, and we conclude their proofs in Section \ref{Section: Concluding the Proofs}. For the rest of this section, we will assume that $n_0\geq 2$ and that $h:[0,\infty)\rightarrow[0,\infty)$ is an arbitrary gauge function. 

This section is outlined as follows: Subsections \ref{ss: Linear Expanding Maps}, \ref{ss: A Preliminary Sequence}, and \ref{ss: Construction of the Cantor Sets} are used to develop the Cantor sets of interest. Subsections \ref{ss: Linear Expanding Maps} and \ref{ss: A Preliminary Sequence} are dedicated to technical conditions and definitions, and the main family of Cantor sets is defined in Subsection \ref{ss: Construction of the Cantor Sets}. Within, Subsection \ref{sss: Main Properties of the Cantor Sets} is used to prove technical properties of the Cantor sets, which will ultimately be used in the proofs of the main theorems. Then, part (II) of Theorem \ref{t.all} is proved in Subsection \ref{ss: Multi-sums of Cantor Sets} and part (III) is proved in Subsection \ref{ss: An Uncountable Family of Zero Dimensional Cantor Sets}. 

\subsection{Linear Expanding Maps}\label{ss: Linear Expanding Maps}
We work with Cantor sets given as attractors of non-stationary systems of linear expanding maps. 

For a fixed parameter $d > 2$, we define the map $E_d : [0, \frac{1}{d}] \cup [1 - \frac{1}{d}, 1] \to [0,1]$ by
\begin{equation}\label{e.Ed}
E_d(x) := \begin{cases}
dx, & x \in [0, \frac{1}{d}], \\
d(x - 1) + 1, & x \in [1 - \frac{1}{d}, 1].
\end{cases}
\end{equation}

\begin{figure}[htbp]
\centering
\begin{tikzpicture}
\begin{groupplot}[
    group style={
        group size=2 by 1,
        horizontal sep=2.2cm,
    },
    width=6cm,
    height=6cm,
    xmin=0, xmax=1,
    ymin=0, ymax=1,
    axis lines=middle,
    xlabel={$x$},
    xtick={0,1},
    ytick={0,1},
    tick style={draw=none},
    samples=8,
    domain=0:1]

\nextgroupplot[title={$d \gg 2$}]
\addplot[blue, domain=0:0.1] {10*x};
\addplot[red, domain=0.9:1] {10*(x-1)+1};
\draw[dashed, gray] (axis cs:0.1,0) -- (axis cs:0.1,1);
\draw[dashed, gray] (axis cs:0.9,0) -- (axis cs:0.9,1);

\nextgroupplot[title={$d \approx 2$}]
\addplot[blue, domain=0:{1/2.2}] {2.2*x};
\addplot[red, domain={1-1/2.2}:1] {2.2*(x-1)+1};
\draw[dashed, gray] (axis cs:{1/2.2},0) -- (axis cs:{1/2.2},1);
\draw[dashed, gray] (axis cs:{1-1/2.2},0) -- (axis cs:{1-1/2.2},1);

\end{groupplot}
\end{tikzpicture}
\caption{The map $E_d$ for $d=10$ (left) and $d=2.2$ (right).}
\label{fig:Ed-graphs}
\end{figure}

Given a block (or word) $D=d_1\dots d_l\in (2,\infty)^l$, we define 
\begin{equation}
    \mathrm{prod}(D):=\prod_{i=1}^\ell d_{i},
\end{equation}
\begin{equation}
    \mathrm{length}(D):=l,
\end{equation}
and
\begin{equation}
    E_D^{-1}:=E_{d_1}^{-1}\circ\cdots \circ E_{d_l}^{-1}.
\end{equation}
Notice that for any collection of words $D_1,\dots, D_k$, we have that 
\begin{equation}
    E_{D_1}^{-1}\circ\cdots\circ E_{D_k}^{-1}([0,1])
\end{equation}
is a union of $2^{\mathrm{length}(D_1)+\cdots+\mathrm{length}(D_k)}$ many disjoint intervals in $[0,1]$, each of length  $\prod_{i=1}^k\mathrm{prod}\left(D_i\right)^{-1}$, and this set contains $0$ and $1$. 
Also, for any sequence of words $D_\infty:=(D_1,D_2,\cdots)$, we set 
\begin{equation}
    C_k\left(D_\infty\right):= E^{-1}_{D_1}\circ\cdots \circ E^{-1}_{D_k}([0,1]),
\end{equation}
and 
\begin{equation}
C\left(D_\infty\right):=\bigcap_{k=1}^\infty  C_k\left(D_\infty\right).
\end{equation}

\subsection{A Preliminary Sequence.}\label{ss: A Preliminary Sequence}
We now set up exactly which blocks to consider, when iterating our expanding maps. 

Let us start with an informal description of the construction of the blocks. We will choose two sequences $\overline a, \overline b\in (2,\infty)^{\mathbb N}$ so that

\begin{itemize}
    \item $a_k\rightarrow 2$ and
    \item $b_k\rightarrow \infty$ as $k\to \infty.$ 
\end{itemize}


Then, we will consider Cantor sets $C\left(D_{\infty}\right)$ where $D_\infty=(D_1,D_2,\dots)$ is a sequence of blocks so that
\begin{equation}
    D_k=a_k\dots a_k \ \text{or} \ D_k=b_k\dots b_k,
\end{equation}
and blocks of each type occur infinitely many times. The lengths of the blocks will be given explicitly, but grow in $k$. Roughly speaking, when $D_k=a_k\dots a_k$, the set $C_{k}(D_\infty)$
is ``thick", and when $D_k=b_k\dots b_k$, the set $C_{k}(D_\infty)$ is ``thin." Switching between the two selections allows us to construct a Cantor set that is the intersection of a sequence of nested sets, where this sequence of sets has infinitely many thick elements and also infinitely many thin elements. Under suitable conditions, the presence of the thick sets will give us our desired sum-set as an interval,
and the presence of the thin sets will guarantee that $C(D_\infty)$ is zero-dimensional (and has zero $h$-measure). 

We now give the precise conditions on the sequences $\overline a$ and $\overline b$. Let us recall that a gauge function $h$ is chosen and fixed now, and so is $n_0\in\mathbb N$. 
\begin{lemma}\label{lemma: sequence conditions}
 
    There exist sequences $\overline a, \overline b, \overline c\in (2,\infty)^{\mathbb N}$ and $(\ell_k)\in \mathbb N^{\mathbb N}$, so that the following hold:
\begin{itemize}
\item $\overline b$ is strictly increasing and $b_n\rightarrow \infty$
\item For every $k\geq 1$, 
\begin{equation}\label{e: thickness condition for sequences}
\frac{1}{a_k-2}\cdot \frac{1}{a_k-2}>1, \   \frac{1}{a_k-2}\cdot \frac{1}{b_k-2}>1, \ \text{and}\  \ \frac{1}{a_k-2}\cdot \frac{1}{c_k-2}>1
\end{equation}
\item For any $k\geq 1$, the following equality holds
\begin{equation}\label{e: products equal}
a_k^{\ell_{2k-1}+\ell_{2k}}=b_k^{\ell_{2k-1}}c_k^{\ell_{2k}}
\end{equation}
\item We have 
\begin{equation}\label{e: conditions on lengths of blocks}
\lim\limits_{k\rightarrow \infty}  \frac{\ell_{2k-1}}{\ell_1+\cdots+\ell_{2k-1}} =1 
\end{equation}
\item We have 
\begin{equation}\label{e: gauge condition in sequqnce construction}
\lim\limits_{k\rightarrow\infty}2^{(n_0-1)\left(\ell_1+\cdots+\ell_{2k-1}\right)}h\left(\frac{n_0-1}{b_k}\right)=0.
\end{equation}
\end{itemize}
\end{lemma}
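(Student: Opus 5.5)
We will construct the sequences inductively in $k$, choosing at stage $k$ the numbers $a_k$, $b_k$, $c_k$, $\ell_{2k-1}$, $\ell_{2k}$. We do this in the following order: first $b_k$, then $a_k$, then $\ell_{2k-1}$, and finally $\ell_{2k}$ and $c_k$, which are determined by \eqref{e: products equal}. The guiding observation is that once $a_k$ is chosen with $a_k-2<1/(b_k-2)$, the thickness conditions \eqref{e: thickness condition for sequences} reduce to the inequalities $a_k<3$ and $c_k\le b_k$ (with $c_k>2$). Indeed, $1/(a_k-2)^2>1$ is equivalent to $a_k<3$. Moreover, $c_k\le b_k$ gives $(a_k-2)(c_k-2)\le (a_k-2)(b_k-2)<1$.

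Suppose $\ell_1,\dots,\ell_{2k-2}$ are already fixed, and write $S_{k-1}=\ell_1+\cdots+\ell_{2k-2}$. We proceed in four steps.

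\emph{Choice of $b_k$.} We first note that $\ell_{2k-1}$ will enter \eqref{e: gauge condition in sequqnce construction}, so choosing $b_k$ before $\ell_{2k-1}$ creates a circularity. To avoid it, we fix $\ell_{2k-1}$ in terms of $S_{k-1}$ before choosing $b_k$. Set $\ell_{2k-1}:=k\,(S_{k-1}+1)$. Then
\[
\frac{\ell_{2k-1}}{S_{k-1}+\ell_{2k-1}}\ \ge\ \frac{k}{k+1}\ \to 1,
\]
which gives \eqref{e: conditions on lengths of blocks}. Since $S_{k-1}+\ell_{2k-1}$ is now a known integer $N_k$, we use continuity of $h$ at $0$ with $h(0)=0$. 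This lets us pick $b_k>\max(b_{k-1}+1,3)$ so large that $h((n_0-1)/b_k)<2^{-(n_0-1)N_k}/k$. Then the quantity in \eqref{e: gauge condition in sequqnce construction} is below $1/k$, hence tends to $0$. It also follows that $\overline b$ is strictly increasing and $b_k\to\infty$.

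\emph{Choice of $a_k$.} Pick $a_k\in(2,3)$ with $a_k<2+1/(b_k-2)$; we may also take $a_k\to2$, though this is not needed.

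\emph{Choice of $\ell_{2k}$ and $c_k$.} Equation \eqref{e: products equal} says that
\[
c_k^{\ell_{2k}}=\frac{a_k^{\ell_{2k-1}+\ell_{2k}}}{b_k^{\ell_{2k-1}}}, \qquad\text{i.e.}\qquad \log c_k = \log a_k - \frac{\ell_{2k-1}}{\ell_{2k}}\,\log\frac{b_k}{a_k}.
\]
Since $b_k>a_k$, the right-hand side is less than $\log a_k$, so $c_k<a_k<3\le b_k$, and the bound $c_k\le b_k$ is automatic. We also need $c_k>2$. This holds once $\ell_{2k}$ is large enough that $\frac{\ell_{2k-1}}{\ell_{2k}}\log\frac{b_k}{a_k}<\log\frac{a_k}{2}$, which is possible because $a_k>2$. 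We choose such an integer $\ell_{2k}$ and define $c_k$ by the displayed formula.

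The only real obstacle is the ordering of the choices. Condition \eqref{e: gauge condition in sequqnce construction} ties $b_k$ to $\ell_{2k-1}$, while \eqref{e: products equal} ties $c_k$ to both. The resolution is that $\ell_{2k-1}$ can be fixed first, since \eqref{e: conditions on lengths of blocks} depends only on the earlier lengths. The compensating block length $\ell_{2k}$ is then chosen last, and it may be arbitrarily large because it appears in neither \eqref{e: conditions on lengths of blocks} nor \eqref{e: gauge condition in sequqnce construction}. With this ordering, every item of the lemma is verified.
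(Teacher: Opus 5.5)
Your proof is correct and is essentially the paper's argument: the same induction on $k$ in the order $\ell_{2k-1}$, then $b_k$, then $a_k$, then $(\ell_{2k},c_k)$. The only cosmetic differences are that you take the integer $\ell_{2k}$ large and solve $a_k^{\ell_{2k-1}+\ell_{2k}}=b_k^{\ell_{2k-1}}c_k^{\ell_{2k}}$ for $c_k$ directly, where the paper goes through an auxiliary $c_k'$, $t^*$ and a ceiling, and that you check all three thickness inequalities explicitly. Just fix your opening sentence, which announces the order $b_k$, $a_k$, $\ell_{2k-1}$ --- this contradicts the order you (necessarily) actually use.
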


\begin{proof}
 The sequences can be constructed  by induction in $k$, by choosing each parameter in the sequence in a particular order. Namely, suppose that we had defined up to terms $a_{k-1}$, $b_{k-1}$, $c_{k-1}$,  $\ell_{2(k-1)-1}$, and $\ell_{2(k-1)}$. We can choose $\ell_{2k-1}$ so large that 
 \begin{equation}
   \frac{\ell_{2k-1}}{\ell_1+\cdots+\ell_{2k-1}}  >1-\frac{1}{k}.
 \end{equation}
 Then, choose $b_k>b_{k-1}$ so large that 
 \begin{equation}
   2^{(n_0-1)\left(\ell_1+\cdots+\ell_{2k-1}\right)}h\left(\frac{n_0-1}{b_k}\right)  <\frac{1}{k},
 \end{equation}
and then choose $a_k>2$ close enough to $2$ so that $ \frac{1}{a_k-2}\frac{1}{b_k-2}>1.$
Let us choose $c_k'\in (2,a_k)$ close enough to $a_k$ so that 
$$\left(\frac{c_k'}{a_k}\right)^{\ell_{2k-1}}>\left(\frac{a_k}{b_k}\right)^{\ell_{2k-1}}.$$
Then, since $\left(\frac{c_k'}{a_k}\right)^t\rightarrow 0$ as $t\rightarrow \infty$, there is some $t^*>\ell_{2k-1}$ such that 
\begin{equation}
    \left(\frac{c_k'}{a_k}\right)^{t^*}=\left(\frac{a_k}{b_k}\right)^{\ell_{2k-1}}.
\end{equation}
Set $\ell_{2k}:=\left \lceil t^* \right \rceil$ and set $c_k:=a_k\left( \frac{c'_k}{a_k}\right)^{\frac{t^*}{\ell_{2k}}}.$ Then, 

$$\left( \frac{c_k}{a_k}\right)^{\ell_{2k}}=\left(\frac{c_k'}{a_k}\right)^{t^*}=\left(\frac{a_k}{b_k}\right)^{\ell_{2k-1}},$$
so that (\ref{e: products equal}) holds. Moreover, since $\frac{t^*}{\ell_{2k}}\leq 1$ and $\frac{c'_k}{a_k}<1$, we have 
$$c_k=a_k\left( \frac{c'_k}{a_k}\right)^{\frac{t^*}{\ell_{2k}}}\geq a_k\cdot \frac{c'_k}{a_k}>2.$$
\end{proof}

The reason for incorporating the sequence $\overline c$ will be apparent later in the technical proofs, but roughly speaking, it is used as a renormalization tool. Namely, for any two sequences of blocks $D_\infty=\left(D_1,D_2,\dots\right)$ and $D_\infty'=\left(D_1', D_2',\dots\right)$, with 
\begin{equation}
    D_k= \underbrace{a_k\dots a_k}_{\ell_{2k-1}+\ell_{2k}} \ \text{or} \ \underbrace{b_k\dots b_k}_{\ell_{2k-1}}\underbrace{c_k\dots c_k}_{\ell_{2k}}, \ \text{and} \ D_k'= \underbrace{a_k\dots a_k}_{\ell_{2k-1}+\ell_{2k}} \ \text{or} \ \underbrace{b_k\dots b_k}_{\ell_{2k-1}}\underbrace{c_k\dots c_k}_{\ell_{2k}},
\end{equation}
for any $k$, the components of the sets $C_k\left(D_\infty\right)$ and $C_k\left(D_\infty '\right)$ are all closed intervals of equal length. This will be particularly helpful when trying to show that the sum-set $C\left(D_\infty\right)+C\left(D_\infty '\right)$ contains an interval, under suitable conditions on $D_\infty$ and $D_\infty'$. However, the presence of longs blocks of $b_k$'s still guarantees the desired zero-dimensional (and zero $h$-measure) outcomes. 
 
\subsection{Construction of the Cantor Sets}\label{ss: Construction of the Cantor Sets}
We now construct the Cantor sets of interest using the sequences provided by Lemma \ref{lemma: sequence conditions}. 

Given $\omega\in \{0,1\}^{\mathbb N}$ and $k\in\mathbb N$, we consider the block
\begin{equation}
    B_k(\omega):= \begin{cases} 
     \underbrace{a_k\dots a_k}_{\ell_{2k-1}+\ell_{2k}} & \text{if} \ \omega_k=1 \\
   \underbrace{b_k\dots b_k}_{\ell_{2k-1}}\underbrace{c_k\dots c_k}_{\ell_{2k}} & \text{if} \ \omega_k=0
   \end{cases},
\end{equation}
and define the Cantor set 
\begin{equation}
    C(\omega):=\bigcap_{k=1}^{\infty} C_k(\omega),
\end{equation}
where 
\begin{equation}
  C_k(\omega):=  E^{-1}_{B_1(\omega)}\cdots E^{-1}_{B_k(\omega)}([0,1]).
\end{equation}
It should be noted that $C_{k+1}(\omega)\subset C_{k}(\omega)$. For notational convenience, we also set 
\begin{equation}
    C_{[k:k']}(\omega):= E_{B_{k'+1}(\omega)}^{-1}\cdots E_{B_{k}(\omega)}^{-1}([0,1]),
\end{equation}
for $0\leq k'< k$, and 
\begin{equation}
    C_{[\infty: k]}:=\bigcap_{q=k+1}^{\infty} E^{-1}_{B_{k+1}(\omega)}\cdots E^{-1}_{B_q(\omega)}([0,1]).
\end{equation}
We now investigate the properties of these Cantor sets. 

\subsubsection{Main Properties of the Cantor Sets}\label{sss: Main Properties of the Cantor Sets}

Here, we develop two technical propositions that will be used in the proofs of (II) and (III) of Theorem \ref{t.all}. The proofs of these parts of Theorem \ref{t.all} will be concluded in  Subsections \ref{ss: Multi-sums of Cantor Sets} and \ref{ss: An Uncountable Family of Zero Dimensional Cantor Sets}, respectively. We now state and prove the propositions.

First, we investigate conditions  on our underlying sequences sufficient to ensure that an $n$-sum of such Cantor sets contains an interval.

\begin{proposition}\label{prop: n sum is a collction of intervals}
Suppose that $n\in\mathbb N$ and $\omega^{(i)}$, $i=1,\dots, n$, are sequences such that for some $k_0\geq 0$, we have 
\begin{equation}
    \omega^{(1)}_k=\cdots=\omega^{(n)}_k
\end{equation}
for all $k\leq k_0$ and 
\begin{equation}
    \omega^{(1)}_k+\cdots+ \omega^{(n)}_k\geq 1,
\end{equation}
for all $k>k_0$. Then, $C\left(\omega^{(1)}\right)+\cdots+ C\left(\omega^{(n)}\right)$ is a union of finitely many disjoint intervals. Moreover, if $k_0=0$, then
\begin{equation}\label{e: sum of n sets is interval - LB - Proposition}
C\left(\omega^{(1)}\right)+\cdots+ C\left(\omega^{(n)}\right)=[0,n].
\end{equation}
\end{proposition}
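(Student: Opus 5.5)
The plan is to exploit the fact that, thanks to \eqref{e: products equal}, every block $B_k(\omega)$ has the same product $P_k:=a_k^{\ell_{2k-1}+\ell_{2k}}=b_k^{\ell_{2k-1}}c_k^{\ell_{2k}}$ regardless of $\omega_k$. Consequently all components of $C_k(\omega^{(i)})$ are intervals of the same length $r_k=(P_1\cdots P_k)^{-1}$, for every $i$. This lets the sum be renormalized level by level. Throughout I assume $n\ge 2$: for $n=1$ the hypothesis would force $C(\omega^{(1)})$ itself to be a finite union of intervals, which is false.

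\textbf{Step 1 (reduction to tails).} Write $C(\omega)=\bigcup_{J} \big(t_J + r_{k_0}\, C_{[\infty:k_0]}(\omega)\big)$, where $J$ runs over the components of $C_{k_0}(\omega)$ and $t_J$ is the left endpoint of $J$. Hence
\[
C(\omega^{(1)})+\cdots+C(\omega^{(n)})=\bigcup_{J_1,\dots,J_n}\Big(t_{J_1}+\cdots+t_{J_n}+r_{k_0}\big(C_{[\infty:k_0]}(\omega^{(1)})+\cdots+C_{[\infty:k_0]}(\omega^{(n)})\big)\Big),
\]
which is a finite union. It therefore suffices to prove that the tail sum $S:=\sum_i C_{[\infty:k_0]}(\omega^{(i)})$ equals $[0,n]$. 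Then the full sum is a finite union of intervals of length $n r_{k_0}$, which can be merged into finitely many disjoint intervals. If $k_0=0$ there is no renormalization, and the sum is exactly $[0,n]$.

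\textbf{Step 2 (one block).} For $k>k_0$ let $F_k^{(i)}:=E^{-1}_{B_k(\omega^{(i)})}([0,1])$, a finite union of intervals containing $0$ and $1$. I claim $F_k^{(1)}+\cdots+F_k^{(n)}=[0,n]$. The stage sets of $E_d$ have bridge-to-gap ratio $\frac{1/d}{1-2/d}=\frac1{d-2}$. Viewed as compact sets, the thickness of $F_k^{(i)}$ is therefore at least $\frac1{a_k-2}$ if $\omega^{(i)}_k=1$, and at least $\min\big(\frac1{b_k-2},\frac1{c_k-2}\big)$ otherwise. I would verify this carefully through the ordered derived gaps, since each stage only shrinks intervals self-similarly.

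Pick $i_0$ with $\omega^{(i_0)}_k=1$ and any $i_1\neq i_0$. By \eqref{e: thickness condition for sequences} the product of the thicknesses exceeds $1$. Both sets have convex hull $[0,1]$, so neither lies in a gap of the other. The Newhouse Gap Lemma (in sumset form) then gives $F_k^{(i_0)}+F_k^{(i_1)}=[0,2]$. Adding the remaining sets one at a time: an interval $[0,m]$ with $m\ge 2$ plus a compact set with hull $[0,1]$ and all gaps of length less than $1<m$ is $[0,m+1]$.

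\textbf{Step 3 (all levels, then compactness).} By induction on $q$, the level sums $\sum_i E^{-1}_{B_{k_0+1}(\omega^{(i)})}\cdots E^{-1}_{B_q(\omega^{(i)})}([0,1])$ all equal $[0,n]$. At each stage every tuple of current components $(I_1,\dots,I_n)$ has a common length $\rho$. Its refinement sum is $\sum_i\min I_i+\rho\sum_i F_{q}^{(i)}=\sum_i I_i$ by Step 2, so coverage is preserved.

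Given $x\in[0,n]$, the sets $\{(x_1,\dots,x_n)\in\prod_i(\text{level-}q\text{ set of }\omega^{(i)}):\sum x_i=x\}$ are nonempty, compact and nested. Their intersection is therefore nonempty, which gives $x\in S$.

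I expect the main obstacle to be Step 2. One must check that the thickness of the multi-step stage set $F_k^{(i)}$ is genuinely bounded below by the one-step ratios, including the mixed $b$-then-$c$ block. One must also state the Gap Lemma in a form valid for these finite-stage compact sets rather than for Cantor sets.
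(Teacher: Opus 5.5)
Your proposal is correct and follows essentially the same route as the paper: thickness bounds for each one-block stage set plus the Newhouse Gap Lemma give $\sum_i C_{[k:k-1]}(\omega^{(i)})=[0,n]$, the equal block products from \eqref{e: products equal} propagate this level by level, and for $k_0>0$ you renormalize the tails $C_{[\infty:k_0]}$ by the common contraction rate. Your explicit compactness step, and your remark that $n\ge 2$ is needed, fill in points the paper leaves implicit: for $n=1$ the statement fails, and the paper's proof tacitly uses an index $j\neq i$.
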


Before, presenting the proof, we recall the notion of \textit{thickness} for compact sets of $\mathbb R$, which will be a tool used in the proof. 

Given a compact set $C\subset \mathbb R$ with convex hull $I$, a \textit{gap} of $C$ is a component of $I\setminus C$. Any enumeration $\mathcal U=\{U_n\}$ of the gaps of $C$ is called a \textit{presentation} of $C$. Given a boundary point $u\in C$ of some gap $U_k$, we call the connected component $B$ contained in $I\setminus\left(U_1\cup\dots\cup U_k\right) $ that contains $u$ a \textit{bridge} of $U_k$. We define 
\begin{equation}
    \mathcal T(C, \mathcal U, u):=\frac{|B|}{|U_k|},
\end{equation}
and set the \textit{thickness} of $C$ to be 
\begin{equation}
    \mathcal T(C):=\sup\limits_{\mathcal U}\inf\limits_{u}\mathcal T(\mathcal C, \mathcal U, u).
\end{equation}
For example, if $d>2$ and $E_d$ is defined by (\ref{e.Ed}), then it is easy to see that for the Cantor set $C=\bigcap_{n=1}^\infty E_d^{-n}([0,1])$ one has $\mathcal T(C)=\frac{1}{d-2}$.

The Newhouse Gap Lemma (initially obtained in \cite{N, N79} to establish the phenomenon of persistent homoclinic tangencies)  states 
\begin{theorem}\label{appendix t: Newhouse Gap Lemma}
    Suppose that $C$ and $C'$ are two compact sets so that 
    \begin{equation}
        \mathcal T(C)\mathcal T\left(C'\right)>1.
    \end{equation}
    Then, one of the following holds:
    \begin{itemize}
        \item $C$ is contained in a gap of $C'$, or vice versa
        \item $C\cap C'\neq \emptyset$
    \end{itemize}
\end{theorem}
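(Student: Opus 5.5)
The plan is to argue by contradiction, using nested ``linked'' pairs of gaps. Suppose $C\cap C'=\emptyset$ and neither set lies in a gap of the other. Since $\mathcal T(C)\mathcal T(C')>1$, I can choose presentations $\mathcal U=\{U_m\}$ of $C$ and $\mathcal U'=\{U'_m\}$ of $C'$ with
\[
\inf_u \mathcal T(C,\mathcal U,u)=\tau,\qquad \inf_{u'}\mathcal T(C',\mathcal U',u')=\tau',\qquad \tau\tau'>1 .
\]
This is possible because the thickness is a supremum over presentations, so one only needs $\tau,\tau'$ close enough to the suprema. I will also treat the two unbounded components of $\mathbb R\setminus I$ and $\mathbb R\setminus I'$ as gaps of index $0$, whose bridge is the whole convex hull $I$ (resp.\ $I'$).

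Call a gap $U$ of $C$ and a gap $U'$ of $C'$ \emph{linked} if each contains exactly one endpoint of the other. The first step is to produce an initial linked pair. The sets are disjoint, and neither lies in a gap of the other, so neither convex hull lies in a bounded gap or in the complement of the other hull. Hence the hulls $I,I'$ overlap, neither contained in a gap of the other. An elementary case check then yields a linked pair. If the hulls interleave, take an unbounded gap of each. If $I'\subset I$, the endpoints of $I'$ lie in distinct gaps of $C$ (they are not in $C$, and not in a common gap by assumption), and one of these bounded gaps of $C$ is linked with an unbounded gap of $C'$.

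The key step is the inductive move. Let $U,U'$ be linked, with $u\in\partial U$ lying in $U'$ and $u'\in\partial U'$ lying in $U$. Let $B$ be the bridge of $U$ at $u$ and $B'$ the bridge of $U'$ at $u'$. Then $|B|\ge\tau|U|$ and $|B'|\ge\tau'|U'|$. Since $\tau\tau'>1$, it cannot be that both $|B|\le|U'|$ and $|B'|\le|U|$. Say $|B|>|U'|$. Then $B$, which starts at $u\in U'$ and points away from $U$, covers the far endpoint $v'$ of $U'$. Since $v'\in C'$ and $C\cap C'=\emptyset$, $v'$ lies in a gap $V$ of $C$ contained in $B$. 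Every gap inside the bridge $B$ has index larger than that of $U$. Moreover, the near endpoint of $V$ lies in $(u,v')\subset U'$. So $V$ and $U'$ are linked, and the index of the $C$-gap has strictly increased; the other case is symmetric. I expect the main obstacle to be bookkeeping here: handling the index-$0$ unbounded gaps, and checking that $B$ ``points away'' from $U$ so that the geometry really forces linking.

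Finally, iterating produces infinitely many linked pairs $(U_{m_j},U'_{m'_j})$ in which at least one index increases at each step. I will argue that both indices tend to infinity. Suppose instead one index stays fixed from some point on, at a gap $U'$. Then infinitely many distinct gaps of $C$ each contain the same endpoint of $U'$, which is impossible because distinct gaps are disjoint. Since the gaps in each presentation have summable lengths, $|U_{m_j}|,|U'_{m'_j}|\to0$. The linked gaps intersect, so an endpoint $x_j\in C$ of $U_{m_j}$ and an endpoint $x'_j\in C'$ of $U'_{m'_j}$ satisfy $|x_j-x'_j|\le|U_{m_j}|+|U'_{m'_j}|\to0$. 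By compactness a subsequence gives a common limit in $C\cap C'$, which is a contradiction.
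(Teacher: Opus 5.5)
The paper does not prove this statement. It is quoted from Newhouse's work and used only through the corollary that follows it. Your argument is the standard linked-gaps proof (essentially the one in Palis--Takens), and its core is sound. That core consists of the dichotomy from $\tau\tau'>1$ for two linked \emph{bounded} gaps, the fact that a gap inside a bridge has larger index, the pigeonhole argument that neither index can freeze, and the compactness limit. You are also right to let ``gap'' include the unbounded components of the complement. With the paper's literal definition (components of $I\setminus C$ only), the statement is false for two disjoint thick Cantor sets with disjoint hulls. However, two steps at the start are wrong as written.

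\textbf{Initial linked pair.} In the interleaved case $a<a'<b<b'$ (where $I=[a,b]$, $I'=[a',b']$), no two unbounded gaps are linked. The gaps $(b,\infty)$ and $(-\infty,a')$ are disjoint because $a'<b$, and so are $(-\infty,a)$ and $(b',\infty)$. Two unbounded gaps on the same side are nested, so only one of them contains the other's endpoint. The correct initial pair is $(b,\infty)$ together with the bounded gap of $C'$ containing $b$, which exists since $b\in I'\setminus C'$.

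\textbf{First inductive move.} Your inductive move relies on $|B|\ge\tau|U|$, which fails for an index-$0$ gap ($|U|=\infty$, $|B|=|I|$). So the thickness dichotomy cannot be applied to the initial pair. There you must always advance the unbounded member, and the justification is the non-containment hypothesis rather than thickness. Suppose $U=(-\infty,a)$ is linked with a bounded $U'=(u',v')$, with $u'<a<v'$. Then $v'<b$, since otherwise $I\subset U'$ and $C$ would lie in a gap of $C'$. Hence $v'$ lies in a bounded gap $V\subset I$ of $C$, and $V$ is linked with $U'$; the other cases are symmetric. After this one step both gaps are bounded. They remain bounded because every new gap is chosen inside a bridge, so your argument goes through from there.

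\textbf{Minor points.} The near endpoint of $V$ lies in the half-open interval from $u$ to $v'$ and may coincide with $u$; it is still inside $U'$, so linking holds. Also, you only need presentations with $\inf_u\mathcal T\ge\tau$, not equality.
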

As a corollary, we obtain (e.g. see \cite[Lemma 6.2]{DG11} for details)
\begin{corollary}\label{appendix cor: corollary to Gap Lemma}
    Under the assumptions of Theorem \ref{appendix t: Newhouse Gap Lemma}, if the minimum and  maximum values of $C$ are $0$ and $1$, respectively, and the same holds for $C'$, then 
    \begin{equation}
        C+C'=[0,2].
    \end{equation}
\end{corollary}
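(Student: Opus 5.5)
The plan is to reduce the sumset statement to an intersection statement and apply Theorem \ref{appendix t: Newhouse Gap Lemma}. The inclusion $C+C'\subseteq[0,2]$ is immediate, since $C,C'\subseteq[0,1]$. For the reverse inclusion, fix $t\in[0,2]$ and observe that
\[
t\in C+C' \iff C\cap (t-C')\neq\emptyset .
\]
So we set $K:=t-C'$ and aim to show that $C\cap K\neq\emptyset$.

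First, we check that $K$ satisfies the same hypotheses as $C'$. The map $x\mapsto t-x$ is an isometry of $\mathbb R$. It sends gaps of $C'$ bijectively to gaps of $K$, and it sends any presentation of $C'$ to a presentation of $K$. The bridges of a gap are mapped to the bridges of the image gap, with all lengths preserved. Hence $\mathcal T(K)=\mathcal T(C')$, and therefore $\mathcal T(C)\mathcal T(K)>1$. Moreover, $K$ is compact, its minimum is $t-1$ and its maximum is $t$, so its convex hull is $[t-1,t]$.

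Next, we apply Theorem \ref{appendix t: Newhouse Gap Lemma} to $C$ and $K$. This gives either $C\cap K\neq\emptyset$, which is what we want, or that one of the two sets lies in a gap of the other. The main (though mild) step is to rule out the second alternative, and both of its cases are ruled out by length considerations.

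\begin{itemize}
\item Suppose $K$ lies in a gap of $C$. A gap of $C$ is a component of $[0,1]\setminus C$. Since $0,1\in C$, such a gap is an open interval contained in $(0,1)$, so it has length at most $1$. It cannot contain the compact set $K$, whose points $t-1$ and $t$ are at distance exactly $1$.
\item Suppose $C$ lies in a gap of $K$. By the same argument, a gap of $K$ is an open subinterval of $(t-1,t)$, and it cannot contain both $0$ and $1$.
\end{itemize}

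If the phrase ``contained in a gap'' in Theorem \ref{appendix t: Newhouse Gap Lemma} is also meant to include the unbounded components of the complement, those cases are excluded as well. If $t\in[0,1]$, then $t\in K\cap[0,1]$. If $t\in[1,2]$, then $t-1\in K\cap[0,1]$. In either case $K$ meets the convex hull of $C$, and symmetrically $C$ meets the convex hull of $K$, so neither set lies entirely outside the other's hull. Hence $C\cap K\neq\emptyset$ for every $t\in[0,2]$, and therefore $C+C'=[0,2]$.
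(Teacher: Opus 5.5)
Your proof is correct. It is essentially the standard argument behind the result the paper cites for this corollary (Lemma 6.2 of Damanik--Gorodetski; the paper gives no proof of its own). That argument rewrites $t\in C+C'$ as $C\cap(t-C')\neq\emptyset$, uses that thickness is invariant under $x\mapsto t-x$, and excludes the gap alternatives because every gap is no longer than the other set's diameter while the hulls $[0,1]$ and $[t-1,t]$ overlap; your separate treatment of the unbounded complementary components is also appropriate, since with the paper's definition of gaps (bounded components only) the Gap Lemma as literally stated needs that case.
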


We now prove Proposition \ref{prop: n sum is a collction of intervals}.
\begin{proof}[Proof of Proposition \ref{prop: n sum is a collction of intervals}.]
First, suppose $k_0=0$. We will show that 
\begin{equation}\label{e: sum of n many level sets is [0,n]}
C_k\left(\omega^{(1)}\right)+\cdots+ C_k\left(\omega^{(n)}\right)   =[0,n],
\end{equation}
for all $k\in\mathbb N$. 

First, for any sequence $\omega$ of $0$'s and $1$'s, we have 
\begin{equation}\label{e: thickness calculation in the proof}
    \mathcal T\left(C_{[k:k-1]}(\omega)\right):=\begin{cases} 
     \frac{1}{a_k-2} & \text{if} \ \omega_k=1 \\
   \min\{\frac{1}{b_k-2}, \frac{1}{c_k-2}  \} & \text{if} \ \omega_k=0
   \end{cases}.
\end{equation}
So, if $\omega'$ is another sequence such that $\omega_k$ and $\omega_k'$ are not both $0$, then 
\begin{equation}\label{e: thickness implication}
    C_{[k:k-1]}(\omega)+C_{[k:k-1]}(\omega')=[0,2],
\end{equation}
by the Newhouse Gap Lemma (specifically, Corollary \ref{appendix cor: corollary to Gap Lemma}), due to assumption (\ref{e: thickness condition for sequences}).

Let $k\in\mathbb N$. Since there is some $i\in\{1,\dots, n\}$ such that $\omega_k^{(i)}=1$,
we get that 
\begin{equation}
    C_{[k:k-1]}\left(\omega^{(i)}\right)+C_{[k:k-1]}\left(\omega^{(j)}\right)=[0,2],
\end{equation}
for any $j\neq i$. Moreover, as $0$ and $1$ are contained in $C_{[k:k-1]}\left(\omega^{(j)}\right)$ for all $j$, we obtain 
\begin{equation}\label{e: thickness implication for multi-sum}
    C_{[k:k-1]}\left(\omega^{(1)}\right)+\cdots+ C_{[k:k-1]}\left(\omega^{(n)}\right)   =[0,n].
\end{equation}
We now proceed to show (\ref{e: sum of n many level sets is [0,n]}) by induction on $k$. First, setting $k=1$ in the previous equation shows that the base of induction holds. 

Now, let $k\ge 2$ be arbitrary, and suppose that $\sum_{i=1}^{n}C_{k-1}\left(\omega^{(i)}\right)=[0,n]$. For each $i=1,\dots, n$, the set $C_{k-1}\left(\omega^{(i)}\right)$ has precisely $2^{\ell_1+\cdots+\ell_{2(k-1)}}$ many disjoint components. These components are all closed interval of equal length and by (\ref{e: products equal}), the lengths are independent of $i$. Denote these common lengths by $a$. Since by definition the set $C_{k-1}\left(\omega^{(i)}\right)$  is the pre-image of $[0,1]$ under the map $E_{B_1\left(\omega^{(i)}\right)}\cdots E_{B_{k-1}\left(\omega^{(i)}\right)}$, each component of $C_{k-1}\left(\omega^{(i)}\right)$ is an image of $[0,1]$ under  a linear contraction map with contraction rate $a$. 

Precisely speaking, for each $i=1,\dots, n$, there are constants $\alpha_l\left(\omega^{(i)}\right)$, $l=1,\dots, 2^{\ell_1+\cdots+\ell_{2(k-1)}},$ such that 
\begin{equation}\label{e: disjoint union of inverse branch images}
    C_{k-1}\left(\omega^{(i)}\right)=\bigsqcup_{l}\left(a[0,1]+\alpha_l\left(\omega^{(i)}\right)\right).
\end{equation}

Let $t\in [0,n]$ be arbitrary. We will show that $t$ is contained in $\sum_{i=1}^{n}C_{k}\left(\omega^{(i)}\right)=[0,n]$. By the induction hypothesis, $t$ is contained in $\sum_{i=1}^{n}C_{k-1}\left(\omega^{(i)}\right)=[0,n]$, so by (\ref{e: disjoint union of inverse branch images}), we have
\begin{equation}
    t\in \sum_{i=1}^{n}\left(a[0,1]+\alpha_{l_i}\left(\omega^{(i)}\right)\right),
\end{equation}
for some indices $l_1,\dots, l_{n}\in\left\{1,\dots, 2^{\ell_1+\cdots+\ell_{2(k-1)}}\right\}$. 

Now, from (\ref{e: thickness implication for multi-sum}) 
we have 
\begin{align*}
\sum_{i=1}^{n}\left(a[0,1]+\alpha_{l_i}\left(\omega^{(i)}\right)\right)&=a[0,n]+\sum_{i=1}^{n}\alpha_{l_i}\left(\omega^{(i)}\right)\\
&=a\sum_{i=1}^{n}C_{[k:k-1]}\left(\omega^{(i)}\right)+\sum_{i=1}^{n}\alpha_{l_i}\left(\omega^{(i)}\right)\\
&=\sum_{i=1}^{n}\left(aC_{[k:k-1]}\left(\omega^{(i)}\right)+\alpha_{l_i}\left(\omega^{(i)}\right)\right).
\end{align*}
However, by definition, the set $C_{k}\left(\omega^{(i)}\right)$ is the pre-image of $C_{[k: k-1]}\left(\omega^{(i)}\right)$ under $E_{B_1\left(\omega^{(i)}\right)}\cdots E_{B_{k-1}\left(\omega^{(i)}\right)}$, for each $i$. So, from (\ref{e: disjoint union of inverse branch images}), the set $aC_{[k:k-1]}\left(\omega^{(i)}\right)+\alpha_{l_i}\left(\omega^{(i)}\right)$ is contained in $C_{k}\left(\omega^{(i)}\right)$ for all $i$. We conclude that $t$ must be contained in $\sum_{i=1}^{n}C_{k}\left(\omega^{(i)}\right)$, as needed.     

Now, suppose that $k_0>0$. Then, by the same argument as before, one can deduce that 
\begin{equation}
   C_{[\infty : k_0]}\left(\omega^{(1)} \right)+ \cdots+C_{[\infty:k_0]}\left( \omega^{(n)} \right)=[0,n].
\end{equation}

Again, by (\ref{e: products equal}), the expanding maps $E_{B_1\left(\omega^{(i)}\right)}\circ \cdots \circ E_{B_{k_0}\left(\omega^{(i)}\right)}$ have the same expansion rate (over their domains), for each $i\in\{1,\dots, n\}$. So, by using a similar linearity argument from before, we can deduce that $\sum_{i=1}^nC\left(\omega^{(i)}\right)$ is a finite union of images of $[0,n]$ under linear contraction maps of the same contraction rate. Precisely, 
\begin{equation}
    \sum_{i=1}^nC\left(\omega^{(i)}\right)=\bigcup_I\left(\tilde a [0,n]+\tilde \alpha_{I}\right),
\end{equation}
for some constants $\tilde \alpha_{I}$, $I\in \{1,\dots, 2^{\ell_1+\cdots \ell_{2k_0}}\}^n$, where 
\begin{equation}
    \tilde a:=\prod_{i=1}^{k_0}\mathrm{prod}\left(B_i\left(\omega^{(1)}\right)\right)^{-1}=\cdots =\prod_{i=1}^{k_0}\mathrm{prod}\left(B_i\left(\omega^{(n)}\right)\right)^{-1},
\end{equation}
so that this set is a disjoint union of finitely many closed intervals. 
\end{proof}

Now, we give conditions on the underlying sequences for which the $n$-sum of such Cantor sets have zero lower box-counting dimension and zero $h$-measure, when $n<n_0$. 

\begin{proposition}\label{prop: sum has zero LB dimension}
   Suppose that $\omega^{(i_1)},\cdots, \omega^{(i_n)}$, $n<n_0$, are sequences such that 
   \begin{equation}
       \omega^{(i_1)}_k=\cdots=\omega^{(i_{n})}_k=0,
   \end{equation}
for infinitely many $k$. Then, 
   \begin{equation}\label{e: zero LB for multi sum}
    \underline{\dim}_B\left(  C\left(\omega^{(i_1)}\right)+\cdots+ C\left(\omega^{(i_n)}\right)\right)=0
\end{equation}
and
\begin{equation}\label{e: h measure is zero in construction - prop}
    \mu^h\left(  C\left(\omega^{(i_1)}\right)+\cdots+ C\left(\omega^{(i_n)}\right) \right)=0.
\end{equation}
   
\end{proposition}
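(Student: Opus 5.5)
Fix one of the infinitely many $k$ with $\omega^{(i_1)}_k=\cdots=\omega^{(i_n)}_k=0$. The plan is to cover the sumset at the intermediate stage where all $n$ sets have just passed through the long block of $b_k$'s in $B_k$. We compare the number of covering intervals with their length. Let $L_k:=\ell_1+\cdots+\ell_{2k-2}$ denote the total length of $B_1,\dots,B_{k-1}$. Let $P_{k-1}$ be the common product (by (\ref{e: products equal}) it is independent of $\omega$), so that components of $C_{k-1}(\omega)$ have length $P_{k-1}^{-1}$.

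\textbf{Step 1: the partial level sets.} For each $j$, consider
$$\tilde C_k(\omega^{(i_j)}):=E^{-1}_{B_1(\omega^{(i_j)})}\cdots E^{-1}_{B_{k-1}(\omega^{(i_j)})}E^{-1}_{b_k\cdots b_k}([0,1]),$$
where the last word consists of $\ell_{2k-1}$ copies of $b_k$. This set contains $C(\omega^{(i_j)})$. It is a union of $2^{L_k+\ell_{2k-1}}$ intervals, each of the same length $r_k:=P_{k-1}^{-1}b_k^{-\ell_{2k-1}}$.

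\textbf{Step 2: counting the cover.} The sum of one component from each of the $n$ sets is an interval of length $nr_k$. Hence the sumset is covered by at most
$$N_k:=2^{n(L_k+\ell_{2k-1})}$$
intervals of length $nr_k$.

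\textbf{Step 3: box dimension.} We have
$$\frac{\log N_k}{-\log(nr_k)}\le \frac{n(L_k+\ell_{2k-1})\log 2}{\ell_{2k-1}\log b_k-\log n}.$$
Since $b_k\to\infty$, the ratio tends to $0$ as soon as $L_k/\ell_{2k-1}$ stays bounded, and (\ref{e: conditions on lengths of blocks}) gives $L_k/\ell_{2k-1}\to 0$. Because $r_k\to0$ along these infinitely many $k$, this yields $\underline{\dim}_B=0$.

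\textbf{Step 4: $h$-measure.} We need $N_k\,h(nr_k)\to0$. Using $r_k\le b_k^{-1}$ (since $\ell_{2k-1}\ge1$ and $P_{k-1}\ge1$) and the monotonicity of $h$, we get $h(nr_k)\le h(n/b_k)$. Condition (\ref{e: gauge condition in sequqnce construction}) controls
$$2^{(n_0-1)(L_k+\ell_{2k-1})}\,h\!\left(\frac{n_0-1}{b_k}\right).$$
Since $n\le n_0-1$, we have $N_k\le 2^{(n_0-1)(L_k+\ell_{2k-1})}$ and $h(n/b_k)\le h((n_0-1)/b_k)$, so $N_k\,h(nr_k)\to0$. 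The lengths $nr_k\to0$, so for every $\delta>0$ these covers are eventually admissible, and $\mu^h_\delta=0$; hence $\mu^h=0$.

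\textbf{Main obstacle.} The key step is choosing the right stage. Covering at the full level $C_k$, after the $c_k$ block, would multiply the count by $2^{\ell_{2k}}$ while only shrinking lengths by $c_k^{\ell_{2k}}$, with $c_k$ close to $2$. That would ruin both estimates. Stopping right after the $b_k$ block avoids this, and it is exactly the quantity that the lemma's conditions were designed to control.
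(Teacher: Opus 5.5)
Your proposal is correct and follows the paper's proof essentially step for step. The paper likewise covers each $C(\omega^{(i_j)})$ at the scale $\delta(m)^{-1}=b_{k(m)}^{\ell_{2k(m)-1}}\tilde d_{k(m)}$ reached right after the $b_{k(m)}$-block, bounds the number of $n\delta(m)$-intervals covering the sumset by $2^{n(\ell_1+\cdots+\ell_{2k(m)-1})}$, and concludes from the block-length condition and the gauge condition; your explicit use of $r_k\le 1/b_k$, $n\le n_0-1$, and the monotonicity of $h$ fills in the step the paper leaves implicit for the $h$-measure.
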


\begin{proof}
    Let $(k(m))$ be a sequence such that
    \begin{equation}
\omega_{k(m)}^{(i_1)}=\cdots=\omega_{k(m)}^{(i_n)}=0,
    \end{equation}
for all $m$. First, by (\ref{e: products equal}), we know that 
\begin{equation}
    \prod_{j=1}^{k(m)-1} \mathrm{prod}\left(B_j\left(\omega^{(i_1)}\right)\right)=\cdots=\prod_{j=1}^{k(m)-1}\mathrm{prod}\left( B_j\left(\omega^{(i_n)}\right)\right)=: \tilde d_{k(m)},
\end{equation}
for all $m$. Now, consider $\delta(m)$ defined by 
\begin{equation}
\delta(m)^{-1}:=(b_{k(m)})^{\ell_{2k(m)-1}}\tilde d_{k(m)}. 
\end{equation}
 For any $i=i_1,\dots, i_n$, the number of $\delta(m)$-intervals needed to cover $C\left(\omega^{(i)}\right)$ is precisely $2^{\ell_1+\cdots+\ell_{2k(m)-1}}$. So, the number of $n\delta(m)$-intervals needed to cover $C\left(\omega^{(i_1)}\right)+\cdots+ C\left(\omega^{(i_n)}\right)$, denoted $N_{n\delta(m)}$, is not greater than $2^{n(\ell_1+\cdots+\ell_{2k(m)-1})}$. Therefore,
\begin{align*}
    &\underline{\dim}_B\left(C\left(\omega^{(i_1)}\right)+\cdots+ C\left(\omega^{(i_n)}\right)\right)\\
    &\leq \lim\limits_{m\rightarrow \infty} \frac{\log N_{n\delta(m)}\left(C\left(\omega^{(i_1)}\right)+\cdots+ C\left(\omega^{(i_n)}\right)\right)}{-\log n\delta(m) }\\
    &\leq \lim\limits_{m\rightarrow \infty}\frac{n(\ell_1+\cdots+\ell_{2k(m)-1})\log2}{\log\left(\frac{1}{n}(b_{k(m)})^{\ell_{2k(m)-1}}\tilde d_{k(m)} \right)}\\
    &\leq \lim\limits_{m\rightarrow \infty}\frac{n(\ell_1+\cdots+\ell_{2k(m)-1})\log 2}{\ell_{2k(m)-1}\log b_{k(m)}-\log n}\\
    &=\lim\limits_{m\rightarrow \infty}n\log 2\left(\frac{\ell_{2k(m)-1}}{\ell_1+\cdots+\ell_{2k(m)-1}}\log b_{k(m)}-\frac{\log n}{\ell_1+\cdots+\ell_{2k(m)-1}}  \right)^{-1}\\
    &=0,
\end{align*}
due to (\ref{e: conditions on lengths of blocks}).

In a similar manner, let us use a covering for $C\left(\omega^{(i_1)}\right)+\cdots+ C\left(\omega^{(i_n)}\right)$ where each covering set has diameter $<n\delta(m)$ and there are no more than  $2^{n(\ell_1+\cdots+\ell_{2k(m)-1})}$ elements in this covering set, which is possible by the previous observation. Now, (\ref{e: h measure is zero in construction - prop}) follows from (\ref{e: gauge condition in sequqnce construction}) and taking the limit as $m\rightarrow\infty$.
\end{proof}

We are now ready to prove the main results. 

\subsection{Multi-sums of Cantor Sets}\label{ss: Multi-sums of Cantor Sets}

We now prove (II) of Theorem \ref{t.all},
and consequentially Theorem \ref{t: multi-sum lower box}, which will follow from

\begin{theorem}
Suppose $\omega^{(1)}, \dots, \omega^{(n_0)}$ are sequences of $0$'s and $1$'s satisfying
\begin{itemize}
    \item $\omega_k^{(1)}+\cdots +\omega_k^{(n_0)}=1$ for all $k$
    \item For each $(i_1,\dots, i_{n_0-1})\in \{1,\dots, n_0\}^{n_0-1}$, there are infinitely many $k$ such that 
    \begin{equation}\label{e: infinitely many terms are 0 - theorem}
\omega^{(i_1)}_k+\cdots+\omega^{(i_{n_0-1})}_k=0.
    \end{equation}
\end{itemize}
Then, $C\left(\omega^{(i_1)}\right)+\cdots+ C\left(\omega^{(i_n)}\right)$ is a Cantor set satisfying
\begin{equation}\label{e: zero LB for multi sum2}
    \underline{\dim}_B\left(  C\left(\omega^{(i_1)}\right)+\cdots+ C\left(\omega^{(i_n)}\right)\right)=0
\end{equation}
and
\begin{equation}\label{e: h measure is zero in construction}
    \mu^h\left(  C\left(\omega^{(i_1)}\right)+\cdots+ C\left(\omega^{(i_n)}\right) \right)=0,
\end{equation}
for all $n<n_0$ and $(i_1,\dots, i_n)\in \{1,\dots, n_0\}^n$, and 
\begin{equation}\label{e: sum of n sets is interval - LB}
C\left(\omega^{(1)}\right)+\cdots+ C\left(\omega^{(n_0)}\right)=[0,n_0].
\end{equation}
\end{theorem}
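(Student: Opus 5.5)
The plan is to derive the statement directly from Proposition \ref{prop: n sum is a collction of intervals} and Proposition \ref{prop: sum has zero LB dimension}, and then add a separate argument that the partial sums are Cantor sets.

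\textbf{The full sum.} For (\ref{e: sum of n sets is interval - LB}), apply Proposition \ref{prop: n sum is a collction of intervals} with $n=n_0$ and $k_0=0$. The first hypothesis is then vacuous. The second, $\omega^{(1)}_k+\cdots+\omega^{(n_0)}_k\geq 1$ for all $k$, follows from the assumption that this sum equals $1$. The proposition gives the sum $[0,n_0]$ directly.

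\textbf{Dimension and measure of partial sums.} Fix $n<n_0$ and indices $(i_1,\dots,i_n)$. Pad the tuple to length $n_0-1$ by repeating $i_1$. The hypothesis then gives infinitely many $k$ with $\omega^{(i_1)}_k+\cdots+\omega^{(i_n)}_k+(\text{repeats})=0$. Since all terms are nonnegative, this forces $\omega^{(i_1)}_k=\cdots=\omega^{(i_n)}_k=0$ for those $k$. Proposition \ref{prop: sum has zero LB dimension} then yields (\ref{e: zero LB for multi sum2}) and (\ref{e: h measure is zero in construction}). Note that the gauge condition (\ref{e: gauge condition in sequqnce construction}) was built with exponent $(n_0-1)$, which dominates $n$, so the cover bound $2^{n(\ell_1+\cdots)}h(n\delta(m))$ still tends to zero; I would check that $n\delta(m)\le (n_0-1)/b_k$ here.

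\textbf{The partial sums are Cantor sets.} This is the main obstacle. The sumset $S$ is compact, being a finite sum of compact sets. It is totally disconnected because $\mu^h(S)=0$ (or $\underline{\dim}_B=0$) forces zero Lebesgue measure, so $S$ contains no interval. It remains to show $S$ has no isolated points. Each $C(\omega^{(i)})$ is perfect. Given $s=x_1+\cdots+x_n\in S$, choose $x_1'\in C(\omega^{(i_1)})$ with $x_1'\neq x_1$ and $x_1'\to x_1$. Then $x_1'+x_2+\cdots+x_n\in S$ is distinct from $s$ and converges to it. Hence $S$ is perfect, compact, totally disconnected and nonempty, which makes it a Cantor set. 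The delicate point I would double-check is that "zero lower box dimension" indeed forces empty interior, which is immediate since an interval has box dimension $1$.
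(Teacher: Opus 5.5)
Your proposal is correct and follows the paper's proof: Proposition \ref{prop: n sum is a collction of intervals} with $k_0=0$ gives the full sum, and Proposition \ref{prop: sum has zero LB dimension} gives the dimension and measure statements (your padding of the index tuple makes explicit a step the paper leaves implicit). The Cantor property then comes from the sum having no interior together with perfectness of finite sums of perfect compact sets. One small correction: rule out intervals using $\underline{\dim}_B(S)=0$ rather than $\mu^h(S)=0$, since for a general gauge such as $h(x)=x^2$ an interval itself has zero $h$-measure.
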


\begin{proof}

First, by Proposition \ref{prop: sum has zero LB dimension}, the condition (\ref{e: infinitely many terms are 0 - theorem}) implies that (\ref{e: zero LB for multi sum2}) and (\ref{e: h measure is zero in construction}) hold for any tuple $(i_1,\cdots, i_n)\in\{ 1,\cdots, n_0\}^n$ with $n<n_0$. That $C\left(\omega^{(i_1)}\right)+\cdots+ C\left(\omega^{(i_n)}\right)$ is a Cantor set follows from the fact that it does not contain any intervals, and the finite sum of any perfect compact sets is perfect. Also, (\ref{e: sum of n sets is interval - LB}) follows directly from Proposition \ref{prop: n sum is a collction of intervals}.

\end{proof}

\subsection{An Uncountable Family of Zero Dimensional Cantor Sets}\label{ss: An Uncountable Family of Zero Dimensional Cantor Sets}

We now prove (III) of Theorem \ref{t.all}, and consequentially, Theorem \ref{t: uncountable lower box}.
A main ingredient will rely on the following proposition regarding subsets of $\mathbb N$, e.g. see \cite[Lemma 9.21]{J}:

We will denote by $\mathrm{Card}(A)$, the cardinality of a set $A$. 

\begin{proposition}
    There is an uncountable collection $\mathcal A$ of subsets of $\mathbb N$ such that 
    \begin{itemize}
        \item $\mathrm{Card}(A)=\mathrm{Card}(\mathbb{N}\backslash A)=\infty$ for all $A\in \mathcal A$
        \item $\mathrm{Card}\left(A\cap A'\right)<\infty$ and $(A\Delta A')\cap [0,\max\{A\cap A'\}]=\emptyset$ for all $A,A'\in\mathcal A$ with $A\neq A'$
       \end{itemize}
\end{proposition}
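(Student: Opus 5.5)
We will realize the family $\mathcal A$ as the set of branches of the full binary tree, with the nodes of the tree coded by natural numbers. This is the standard almost-disjoint-family construction, with one extra requirement on the coding: codes must increase with the depth of the node. That requirement is what gives the ordering condition $(A\Delta A')\cap[0,\max\{A\cap A'\}]=\emptyset$.

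Let $T=\bigcup_{k\ge 0}\{0,1\}^k$ be the set of finite binary words, including the empty word. Fix a bijection $\nu:T\to\mathbb N$ that enumerates $T$ level by level (breadth-first). Concretely, list the $2^k$ words of length $k$ in any order after all words of length $<k$. The only property we use is
\[
|s|<|t|\ \Longrightarrow\ \nu(s)<\nu(t).
\]
For each $x\in\{0,1\}^{\mathbb N}$ we set $A_x:=\{\nu(x|_k):k\ge 0\}$, where $x|_k$ is the prefix of length $k$. We then take $\mathcal A=\{A_x: x\in\{0,1\}^{\mathbb N}\}$.

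The verification goes in the following order.

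\emph{Uncountability and the cardinality conditions.} If $x\ne y$, then $x|_k\ne y|_k$ for large $k$, so $x\mapsto A_x$ is injective and $\mathcal A$ is uncountable. Each $A_x$ is infinite because the prefixes $x|_k$ are pairwise distinct. Each $A_x$ is co-infinite because on level $k$ it contains exactly one of the $2^k$ words, so for every $k\ge1$ its complement contains $2^k-1$ codes of level $k$.

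\emph{The intersection.} Let $x\neq y$ and let $m$ be the first index where they differ, so $x|_m=y|_m$ and $x|_{m+1}\ne y|_{m+1}$. Since $\nu$ is injective and a word determines its length, $\nu(x|_k)=\nu(y|_j)$ forces $k=j$ and $x|_k=y|_k$, which happens exactly when $k\le m$. Hence
\[
A_x\cap A_y=\{\nu(x|_k):0\le k\le m\}.
\]
This set is finite, and it is nonempty because it contains the root code. By the monotonicity of $\nu$ in depth, its maximum is $\nu(x|_m)$.

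\emph{The ordering condition.} Every element of $A_x\Delta A_y$ has the form $\nu(x|_k)$ or $\nu(y|_k)$ with $k>m$. Such a word is longer than $x|_m$, so its code is strictly larger than $\nu(x|_m)=\max\{A_x\cap A_y\}$. Therefore $(A_x\Delta A_y)\cap[0,\max\{A_x\cap A_y\}]=\emptyset$.

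The only step needing care is the choice of the enumeration $\nu$. An arbitrary coding of $T$ would still give an uncountable almost-disjoint family, but the ordering condition could fail. Insisting that $\nu$ respects depth, which the breadth-first enumeration does, removes this obstacle. It also does not matter whether $\mathbb N$ is taken to contain $0$.
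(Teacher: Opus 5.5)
Your proof is correct. The paper gives no proof of its own and only cites Jech's almost-disjoint-family lemma; your argument is the classical branches-of-the-binary-tree construction of such a family. Your level-by-level coding $\nu$, together with the fact that the root code always lies in $A_x\cap A_y$ (so $\max\{A_x\cap A_y\}$ is defined), is exactly what gives the extra ordering condition $(A\Delta A')\cap[0,\max\{A\cap A'\}]=\emptyset$, which almost-disjointness alone does not provide.
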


This phenomenon can be understood in the context of sequences of $0$'s and $1$'s, in the following way.  
\begin{corollary}\label{corollary: collection of sequences}
    There is an uncountable collection $\mathcal S$ of sequences of $0$'s and $1$'s such that for any $\omega\in \mathcal S$, 
    \begin{equation}
        \omega_k= 0,
    \end{equation}
    for infinitely many $k$, and 
    if $\omega, \omega' \in \mathcal S$, then there exists a $k^*=k^*\left(\omega,\omega'\right)\geq 1$ such that 
\begin{equation}
    \omega_k=\omega_k'
\end{equation}
for all $k\leq k^*$, and if $k>k^*$, then 
\begin{equation}
    \omega_k+\omega_k'\geq 1.
\end{equation}

\end{corollary}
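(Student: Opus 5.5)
We will obtain the collection $\mathcal S$ from the family $\mathcal A$ of the preceding Proposition by complementation. To each $A\in\mathcal A$ we associate the sequence $\omega=\omega(A)$ given by $\omega_k=0$ if $k\in A$ and $\omega_k=1$ if $k\notin A$. Thus $\omega$ is the indicator function of $\mathbb N\setminus A$, and we set $\mathcal S=\{\omega(A): A\in\mathcal A\}$. The map $A\mapsto\omega(A)$ is injective, so $\mathcal S$ is uncountable. Since $\mathrm{Card}(A)=\infty$, we have $\omega_k=0$ for infinitely many $k$, which is the first required property.

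For the second property, fix $A\neq A'$ in $\mathcal A$ and write $\omega=\omega(A)$ and $\omega'=\omega(A')$. We have $\omega_k+\omega_k'=0$ exactly when $k\in A\cap A'$. This intersection is finite, so we take $k^*:=\max(A\cap A')$ when $A\cap A'\neq\emptyset$. Then for every $k>k^*$ we have $k\notin A\cap A'$, hence $\omega_k+\omega'_k\geq 1$. For $k\leq k^*$, the condition $(A\Delta A')\cap[0,\max\{A\cap A'\}]=\emptyset$ says that $k\in A$ if and only if $k\in A'$, so $\omega_k=\omega'_k$.

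The one delicate point is the degenerate case $A\cap A'=\emptyset$, since there is no maximum and the statement asks for $k^*\geq 1$. The plan is to modify the family so that this case never occurs. We replace each $A$ by $\tilde A=\{1\}\cup (A+1)$, shifting $A$ up by one and adjoining $1$. This preserves both cardinality conditions, and injectivity, since $A\mapsto\tilde A$ is injective. It also preserves finiteness of pairwise intersections, because $\tilde A\cap\tilde A'=\{1\}\cup((A\cap A')+1)$. Every pairwise intersection now contains $1$, so its maximum $m$ exists and $m\ge1$.

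It remains to check that the symmetric-difference condition survives the shift. We have $\tilde A\Delta\tilde A'=(A\Delta A')+1$. If $A\cap A'\neq\emptyset$, then $m=\max(A\cap A')+1$, and the condition transfers directly from the original family. If $A\cap A'=\emptyset$, then $m=1$, and we need $(A\Delta A')+1$ to avoid $[0,1]$. This holds because $(A\Delta A')+1\subset\{2,3,\dots\}$, as $0\notin\mathbb N$ under the convention that $\mathbb N$ starts at $1$; with $\mathbb N\ni0$ we shift by $2$ instead. Applying the construction of the first two paragraphs to the modified family $\{\tilde A: A\in\mathcal A\}$ gives the corollary with $k^*\ge1$.
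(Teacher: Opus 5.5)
Your argument is correct and matches the paper's proof: send $A\in\mathcal A$ to the indicator of $\mathbb N\setminus A$ and take $k^*=\max(A\cap A')$. Your shift $\tilde A=\{1\}\cup(A+1)$ adds care the paper omits, since its one-line proof ignores the case $A\cap A'=\emptyset$, where $\max(A\cap A')$ is undefined and $k^*\ge 1$ can fail. That case is harmless downstream, though, because Proposition~\ref{prop: n sum is a collction of intervals} allows $k_0=0$.
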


\begin{proof}
 Consider the mapping $\mathcal A\rightarrow \{0,1\}^{\mathbb N}$, $A\mapsto \omega^A,$ where
\begin{equation}
   \omega^A_k=\begin{cases}
  1 & \ \text{if} \ k\not\in A, \\
  0 & \ \text{if} \ k\in A
\end{cases}.
\end{equation}
 The mapping is injective, and we set $\mathcal S$ to be its image. 
\end{proof}

Now, (III) of Theorem \ref{t.all} follows from

\begin{theorem}\label{t: uncountable cantor sets construction}
  Let $\mathcal S$ be as in Corollary \ref{corollary: collection of sequences}. Then   
    \begin{itemize}
        \item[(1)] $\underline{\dim}_B\left(C\left(\omega \right)\right)=0$ and $\mu^h\left(C\left(\omega \right)\right)=0$ for all $\omega\in \mathcal S$

        \item[(2)] $C(\omega)+C\left(\omega'\right)$ is a disjoint union of finitely many intervals for all $\omega, \omega'\in \mathcal S$ with $\omega\neq \omega'$.
    \end{itemize}
\end{theorem}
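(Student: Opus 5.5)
Both parts reduce to the two technical propositions already proved, Proposition \ref{prop: sum has zero LB dimension} and Proposition \ref{prop: n sum is a collction of intervals}. The only work is to check that the sequences in $\mathcal S$ from Corollary \ref{corollary: collection of sequences} satisfy their hypotheses.

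For part (1), fix $\omega\in\mathcal S$ and apply Proposition \ref{prop: sum has zero LB dimension} with $n=1$ and the single sequence $\omega^{(i_1)}=\omega$. This requires $1<n_0$, which holds since $n_0\ge 2$. It also requires $\omega_k=0$ for infinitely many $k$, which is guaranteed by Corollary \ref{corollary: collection of sequences}. The proposition then gives directly $\underline{\dim}_B(C(\omega))=0$ and $\mu^h(C(\omega))=0$. The argument is the covering by the $2^{\ell_1+\cdots+\ell_{2k(m)-1}}$ intervals of length $\delta(m)$ at the stages $k(m)$ where a long $b_k$-block occurs, combined with (\ref{e: conditions on lengths of blocks}) and (\ref{e: gauge condition in sequqnce construction}).

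For part (2), take $\omega\neq\omega'$ in $\mathcal S$ and let $k^*=k^*(\omega,\omega')$ be as in Corollary \ref{corollary: collection of sequences}. Then $\omega_k=\omega'_k$ for all $k\le k^*$, and $\omega_k+\omega'_k\ge 1$ for all $k>k^*$. These are exactly the hypotheses of Proposition \ref{prop: n sum is a collction of intervals} with $n=2$, $\omega^{(1)}=\omega$, $\omega^{(2)}=\omega'$ and $k_0=k^*$, so $C(\omega)+C(\omega')$ is a union of finitely many disjoint intervals. The mechanism behind this is as follows. Beyond level $k_0$, at every stage at least one of the two level sets is an $a_k$-block, so the Newhouse thickness condition (\ref{e: thickness condition for sequences}) together with Corollary \ref{appendix cor: corollary to Gap Lemma} gives that the tails $C_{[\infty:k_0]}$ sum to $[0,2]$. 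Up to level $k_0$ the two constructions coincide, and by (\ref{e: products equal}) they have equal contraction rates, so the full sum is a finite union of translates of $\tilde a[0,2]$.

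The one point needing care is the translation between the set-theoretic proposition (from \cite{J}) and Corollary \ref{corollary: collection of sequences}. In particular, one must check that $k^*$ can be taken $\ge 0$ when $A\cap A'=\emptyset$; this case is harmless, since $k_0=0$ is allowed in Proposition \ref{prop: n sum is a collction of intervals} and then the sum is $[0,2]$. Once the corollary is accepted as stated, I expect no real obstacle; the theorem is a direct assembly of the earlier results.
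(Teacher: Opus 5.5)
Your proposal is correct and matches the paper's proof, which likewise obtains (1) from Proposition \ref{prop: sum has zero LB dimension} with $n=1<n_0$ and (2) from Proposition \ref{prop: n sum is a collction of intervals} with $n=2$ and $k_0=k^*$. Your check that the case $A\cap A'=\emptyset$ is handled by taking $k_0=0$ is a small refinement that the paper leaves implicit.
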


\begin{proof}
Item (1) follows directly from Proposition \ref{prop: sum has zero LB dimension} and Item (2) follows from Proposition \ref{prop: n sum is a collction of intervals}. 
\end{proof}

\section{Concluding the Proofs}\label{Section: Concluding the Proofs}
We now finish the proof of Theorem \ref{t.all}.
\begin{proof}[Proof of Theorem \ref{t.all} (I)]
    Let $C_1, \dots, C_n$ be Cantor sets as in (II) of Theorem \ref{t.all}, and set 
    \begin{equation}
        C:=C_1\cup\cdots\cup C_n.
    \end{equation}
    Then, as $nC$ contains $C_1+\cdots C_n$, it is of course the entire interval $[0,n]$. On the other hand, for any $m<n$, we have 
    \begin{equation}
        mC=\bigcup_{(i_1,\dots, i_m)}\left( C_{i_1}+\cdots+ C_{i_m} \right)
    \end{equation}
where the union is taken over all possible tuples $(i_1,\dots, i_m)\in \{1,\dots, n\}^m$. Since $C_{i_1}+\cdots+ C_{i_m}$ is a Cantor set for any such tuple, $mC$ is again a Cantor set. Lastly, as $\mu^h$ is an outer measure, we also have 
    \begin{equation}
        \mu^h\left(m C  \right)\leq \sum_{(i_1,\cdots, i_m)}\mu^h\left( C_{i_1}+\cdots+ C_{i_m}\right) =0. 
    \end{equation}
\end{proof}

Finally, we conclude with the proof of Theorem \ref{t.2}, and consequentially that of Theorem \ref{t.first} as well. 
\begin{proof}[Proof of Theorem \ref{t.2}]
This is an application of Theorem \ref{t.all} (I) for the specific gauge function
    \begin{equation}\label{e.h}        h(x):=\begin{cases}|\log(x)|^{-1} & 0<x<1 \\ 0 & x=0\end{cases},
    \end{equation}
combined with the following result by Erd\"os and Gillis \cite{EG} (see also \cite{Lin}, \cite{Car}, and a discussion in Section \ref{ss.Haus} above): 
\begin{theorem}[Erd\"os and  Gillis, \cite{EG}]
If a set $C\subset [0,1]$ has finite Hausdorff measure with respect to the gauge function $h$ from (\ref{e.h}), then its logarithmic capacity vanishes. 
\end{theorem}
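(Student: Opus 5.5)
The statement just above is the Erd\H{o}s--Gillis theorem: if $\mu^h(C)<\infty$ for $h(x)=|\log x|^{-1}$, then $\mathrm{cap}(C)=0$. I would argue by contradiction using Definition \ref{def:log_cap}. Suppose $\mathrm{cap}(C)>0$. Then there is a probability measure $\mu$ on $C$ with $I(\mu)<\infty$.

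The first key step is a local bound on $\mu$. Finite energy forces the potential $U_\mu(z)=\int\log\frac{1}{|z-w|}\,d\mu(w)$ to be finite $\mu$-a.e. For each such $z$, the tail $\int_{|z-w|<r}\log\frac{1}{|z-w|}\,d\mu(w)$ tends to $0$ as $r\to0$. Since the integrand is at least $\log(1/r)$ on that ball, this gives
\[
\lim_{r\to 0}\mu(B(z,r))\,|\log r| = 0,
\]
that is, $\mu(B(z,r))=o(h(r))$ for $\mu$-a.e.\ $z$.

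The second step converts this into a mass-distribution estimate. By Egorov's theorem, choose a compact $K\subset C$ with $\mu(K)>1/2$ on which the convergence is uniform. Fix $\varepsilon>0$. Then there is $r_0>0$ with $\mu(B(z,r))\le \varepsilon h(r)$ for all $z\in K$ and all $r<r_0$.

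Now take any cover $\{U_i\}$ of $C$ by intervals with $|U_i|<\delta$, where $\delta<r_0/2$. Discard every $U_i$ that misses $K$. Each remaining $U_i$ contains a point $z\in K$, so $U_i\subset B(z,|U_i|)$ and $\mu(U_i)\le \varepsilon h(|U_i|)$. Summing over the remaining intervals gives
\[
\tfrac12<\mu(K)\le \sum_i \mu(U_i)\le \varepsilon\sum_i h(|U_i|).
\]
Hence $\mu^h_\delta(C)\ge 1/(2\varepsilon)$, and therefore $\mu^h(C)\ge 1/(2\varepsilon)$. Letting $\varepsilon\to0$ gives $\mu^h(C)=\infty$, which contradicts the hypothesis. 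So $\mathrm{cap}(C)=0$.

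The step that needs the most care is the first one: deducing $\mu(B(z,r))|\log r|\to0$ pointwise from finite energy. Two points must be handled.
\begin{itemize}
\item The integrand $\log\frac{1}{|z-w|}$ can be negative. On $[0,1]$, however, $|z-w|\le1$, so it is nonnegative.
\item One needs $U_\mu(z)<\infty$ for $\mu$-a.e.\ $z$. This follows because $U_\mu\ge0$ and $\int U_\mu\,d\mu=I(\mu)<\infty$.
\end{itemize}
With these in place, the tail estimate is dominated convergence applied to $\mathbf 1_{|z-w|<r}\log\frac{1}{|z-w|}$.

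In the paper itself this result is simply quoted from \cite{EG}. It is combined with Theorem \ref{t.all}(I), which yields $\mu^h(C)=0$ and hence $\mathrm{cap}(C)=0$, and this proves Theorem \ref{t.2}. Theorem \ref{t.first} is the case $n=2$.
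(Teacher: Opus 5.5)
Your proof is correct, but it is not what the paper does. The paper gives no argument here: it simply quotes Erd\H{o}s--Gillis, whose result was originally formulated for the transfinite diameter.

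You instead work directly with the energy definition (Definition~\ref{def:log_cap}) and run the standard Frostman-type argument. Finite energy, together with nonnegativity of the kernel on $[0,1]$, gives $\mu(B(z,r))\,|\log r|\to 0$ for $\mu$-a.e.\ $z$. A mass-distribution estimate on a set of measure $>1/2$ where this convergence is uniform then forces $\mu^h(C)=\infty$.

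What your route buys:
\begin{itemize}
\item a self-contained proof matched to the paper's own definition of capacity, with no appeal to the equivalence between transfinite diameter and energy capacity;
\item the sharper statement that $\mathrm{cap}(C)>0$ implies $\mu^h(C)=\infty$;
\item an argument that works unchanged for any nonnegative decreasing kernel $K$ with $h=1/K$.
\end{itemize}

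Three small points should be tidied.
\begin{itemize}
\item Egorov's theorem needs a sequence, not a continuous parameter. Use $r_j=2^{-j}$ and the monotonicity of $r\mapsto\mu(B(z,r))$. For $2^{-j-1}\le r<2^{-j}$ this costs at most a factor $(j+1)/j\le 2$.
\item An interval of length $|U_i|$ containing $z$ lies in the closed ball $\{w:|w-z|\le |U_i|\}$, not necessarily in the open one. Either pass to closed balls using continuity of $h$, or use $B(z,2|U_i|)$ together with $h(2t)\le 2h(t)$ for $t\le 1/4$. This is presumably why you took $\delta<r_0/2$.
\item Definition~\ref{def:log_cap} is stated for compact sets. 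For a general Borel $C$, apply your argument to compact subsets via inner capacity. This does not matter for the Cantor sets to which the paper applies the result.
\end{itemize}
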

\end{proof}

\end{document}